\colorlet{darkblue}{blue!50!black}
\newcommand{\p}{\partial}
\newcommand{\e}{\varepsilon}
\newcommand{\R}{{\mathbb R}}
\newcommand{\IP}{{\mathbb P}}
\newcommand{\Z}{{\mathbb Z}}
\newcommand{\E}{{\mathbb E}}
\newcommand{\T}{{\mathbb T}}
\newcommand{\dd}{{\textup d}}
\newcommand{\BB}{{\cal B}}
\newcommand{\DD}{{\cal D}}
\newcommand{\FF}{{\cal F}}
\newcommand{\HH}{{\cal H}}
\newcommand{\NN}{{\cal N}}
\newcommand{\PP}{{\cal P}}
\newcommand{\PPPP}{{\mathfrak P}}
\newcommand{\sgn}{\mathop{\rm sgn}\nolimits}
\newcommand{\diver}{\mathop{\rm div}\nolimits}
\theoremstyle{plain}
\newtheorem*{mt}{Main Theorem}
\newtheorem{theorem}{Theorem}[section]
\newtheorem{proposition}[theorem]{Proposition}
\theoremstyle{definition}
\newtheorem{definition}[theorem]{Definition}
\theoremstyle{remark}
\newtheorem{remark}[theorem]{Remark}
\newtheorem*{example*}{Example}
\numberwithin{equation}{section}
\begin{document}
\author{Thi Hien Nguyen\footnote{Laboratoire AGM, UMR CNRS 8088, CY Cergy Paris University, France;  e-mail: \href{mailto:Thi-Hien.Nguyen5@cyu.fr}{Thi-Hien.Nguyen5@cyu.fr}}
\and  Armen Shirikyan\footnote{Department of Mathematics, CY Cergy Paris University, CNRS UMR 8088, 2 avenue Adolphe Chauvin, 95302 Cergy--Pontoise, France;
 e-mail: \href{mailto:Armen.Shirikyan@cyu.fr}{Armen.Shirikyan@cyu.fr}}}
\title{Viscosity estimation for 2D pipe flows~I. Construction, consistency, asymptotic normality}
\date{}
\maketitle
\begin{abstract}
We consider the motion of incompressible viscous fluid in a rectangle, imposing the periodicity condition in one direction and the no-slip boundary condition in the other. Assuming that the flow is subject to an external random force, white in time and regular in space, we construct an estimator~$\hat\nu_t$ for the viscosity~$\nu$ using only observations of the~$L^2$ norm of the vorticity on the time interval $[0,t]$. The goal of the paper is to investigate the asymptotic properties of~$\hat\nu_t$ as $t\to+\infty$. It is proved that the estimator~$\hat\nu_t$ is  strongly consistent and asymptotically normal. The proof of consistency is based on the explicit formula for the estimator and some bounds for trajectories, while that of asymptotic normality uses in addition mixing properties of the Navier--Stokes flow.

\smallskip
\noindent
{\bf AMS subject classifications:} 35Q30, 37L55, 62M05, 76D06

\smallskip
\noindent
{\bf Keywords:}  Navier--Stokes equations, white noise, viscosity estimation, consistency, asymptotic normality
\end{abstract}

\tableofcontents

\section{Introduction}
\label{s0}
The problem of estimation of viscosity in a fluid flow arises in various applications. For instance, the paper ~\cite{ZYZM-2017} describes the importance of calculating viscosity in oil-water emulsion widely used in crude oil production and transportation. In~\cite{DIG-2016}, the authors argue that a real-time control of viscosity may be used for increasing the efficiency of combustion in a fuel power plant. There are many types of devices for measuring the viscosity of a fluid, and we refer the reader to the papers~\cite{BS-2003,BKES-2018} that describe some of them used in the oil industry. The aim of this paper is to  justify theoretically the possibility of measuring fluid viscosity based on  observations of a one-dimensional  functional (not using the temperature and pressure) on relatively short time intervals. A natural mathematical model for this type of problem would have been the  three-dimensional Navier--Stokes system in a cylindrical domain, with the hypothesis that the flow stabilises to a constant velocity at infinity. However, mathematical investigation of this problem is highly complicated due to the absence of global well-posedness of the PDE in question and the unboundedness of the physical domain. Hence, we make two simplifications: the problem is studied in a two-dimensional strip, assuming the periodicity in the unbounded direction. 

We thus consider the 2D Navier--Stokes system in the domain
$$
D=\{x=(x_1,x_2)\in \R^2: -1<x_2<1\},
$$ 
imposing the no-slip condition on the boundary and the periodicity condition in the horizontal direction:
\begin{align}
	\p_tu+\langle u,\nabla\rangle u-\nu\Delta u+\nabla p&=\eta(t,x), \quad \diver u=0,\label{NS}\\
	u\big|_{x_2=\pm1}&=0, \quad \theta_au\equiv u.\label{BC}
\end{align}
Here $\nu>0$ is the viscosity, $(u,p)$ are the unknown velocity and pressure, $\eta$ is an external force whose exact form is specified below, and $\theta_a$ is a translation operator in~$x_1$ taking a function $v(x_1,x_2)$ to $v(x_1+a,x_2)$. In what follows, with a slight abuse of notation, we write~$D$ for the direct product of the circle $\T_a:=\R/(a\Z)$ and the interval~$(-1,1)$, so that any function defined on~$D$ is $a$-periodic with respect to~$x_1$. Concerning the right-hand side~$\eta$, we assume that it is a spatially regular white noise:
\begin{equation}\label{external-force}
	\eta(t,x)=\frac{\p}{\p t}\zeta(t,x), \quad \zeta(t,x)=\sum_{j=1}^\infty b_j\beta_j(t)e_j(x), 
\end{equation}
where $\{e_j\}$ is an orthonormal basis consisting of the eigenfunctions of the Stokes operator in~$D$ (see Section~\ref{s-formulations} for details), $\{\beta_j\}$ are independent standard Brownian motions, and~$\{b_j\}$ is a sequence of non-negative numbers going to zero sufficiently fast. We denote 
\begin{equation}\label{number-B}
	B:=\sum_{j=1}^\infty b_j^2
\end{equation}
and emphasise that the number~$B$ can be arbitrarily small, so that there is no restriction on the average size of the noise in~\eqref{NS}. 

It is well known that problem~\eqref{NS}, \eqref{external-force} generates a Feller-continuous Markov process in an appropriate function space (see Section~15.2 in~\cite{DZ1996} or Section~2.4 in~\cite{KS-book}), and we denote by~$u(t)$ its trajectories, which are vector functions with two components~$(u_1,u_2)$. For any $t>0$, let us set 
\begin{equation}\label{estimator}
	\hat\nu_t=\frac{Bt}{2}\biggl(\int_0^t\|\nabla u(s)\|^2\dd s \biggr)^{-1}, 
\end{equation} 
where $\nabla u$ stands for the $2\times 2$ matrix $(\p_iu_j)$, and $\|\cdot\|$ denotes the $L^2$ norm over~$D$. The following theorem is the main result of this paper; see Section~\ref{s-formulations} for an exact formulation. 

\begin{mt}
Let $B>0$. Then the properties below hold for trajectories of the stochastic Navier--Stokes system~\eqref{NS}, \eqref{BC} with arbitrary~$\nu>0$ and~$a>0$. 
\begin{description}
	\item [\sl Correctness.] The estimator $\hat \nu_t$ is well defined for almost every realisation of the noise. 
	\item [\sl Consistency.] For any $\e>0$, the difference $\hat\nu_t-\nu$ almost surely converges to zero  with the rate $t^{-\frac12+\e}$. 
	\item [\sl Asymptotic normality.] Suppose, in addition, that $b_j>0$ for all $j\ge1$. Then, for any $\e>0$, the  random variables $\sqrt{t}\,(\hat\nu_t-\nu)$  converge  weakly, with the rate~$t^{-\frac14+\e}$, to the centred normal law with some variance $\sigma_\nu^2>0$. 
\end{description}
\end{mt}

Let us mention that the problem of estimation of a drift parameter in finite-dimensional diffusion processes is rather well understood. We refer the reader to Chapter~III in~\cite{IH1981} and Chapter~2 in~\cite{kutoyants2004} for various results on this subject. The parameter estimation for infinite-dimensional diffusions usually requires rather strong hypotheses on the diffusion operator. In the case of linear SPDEs, this type of problems were studied in the papers~\cite{HKR-1993,HR-1995,LR-1999} (see also the book~\cite{RL2017} for more references), which are devoted to the investigation of MLEs based on finite-dimensional observations. Roughly speaking, those papers establish the consistency and asymptotic normality of the estimator as the dimension of observations goes to infinity. The case of the Navier--Stokes system and some other parabolic SPDEs were studied in~\cite{CGH-2011,cialenco-2018,PS-2020}. Using again finite-dimensional observations of solutions on a fixed time interval, some MLE-type estimators are constructed and their consistency and asymptotic normality are  proved as the dimension goes to infinity. The setting of this paper differs from that used in the above-mentioned works in two respects dictated by the applications discussed at the beginning of the introduction: first, we are interested in a viscosity estimator based on a {\it fixed scalar\/} observable and, second, we deal with a noise that is {\it regular\/} in the space variables. The latter makes it impossible to use the MLE-type estimators because the measures arising in the space of trajectories for different values of viscosity are likely to be singular and the very definition of MLE is far from being simple.  

In conclusion, we note that a natural question in the context of the viscosity estimation is a detailed description of probabilities of deviations of the estimator~$\hat\nu_t$ from its limiting value~$\nu$. That problem requires somewhat different techniques coming from the theory of large deviations and will be addressed in the second part of this project. 

\smallskip
The paper is organised as follows. In Section~\ref{s-MR}, we give an exact formulation of the main results and outline the scheme of their proofs. Section~\ref{s-lln} is devoted to the proof of the (strong) consistency of~$\hat\nu_t$. In Section~\ref{s-clt}, we establish the asymptotic normality of~$\hat\nu_t$. 

\subsubsection*{Acknowledgments}
This research was supported by the \textit{CY Initiative of Excellence\/} through the grant {\it Investissements d'Avenir\/} ANR-16-IDEX-0008. The first  author was supported by the project {\it EcoDep\/} PSI-AAP2020-0000000013.

\subsubsection*{Notation}
As was mentioned before, we write $D=\T_a\times(-1,1)$, where $\T_a$ is a circle of length~$a$, and assume that all the functions defined on~$D$ are $a$-periodic with respect to the variable~$x_1$. Given a separable Banach space~$X$ and a closed  interval $J\subset\R$, we write $C_b(J,X)$ for the space of bounded continuous functions $f:J\to X$ and $L^p(J,X)$ for the space of Borel-measurable functions $f:J\to X$ such that 
$$
\|f\|_{L^p(J,X)}:=\biggl(\int_J\|f(t)\|_X^p\dd t\biggr)^{1/p}<\infty,
$$
with the usual modification for $p=\infty$. We often use the following function spaces arising in the theory of the Navier--Stokes system. 
\begin{itemize}
	\item $L^p(D)$ is the usual Lebesgue space over the domain~$D$. We use the same symbol for spaces of vector and scalar functions and write $(\cdot,\cdot)$ for the $L^2$ inner product. 
	\item $H^s(D)$ is the Sobolev space of order~$s\in\Z$ on~$D$. For $s\ge1$, we denote by~$H_0^s(D)$ the closure of~$C_0^\infty(D)$ in~$H^s(D)$. 
	\item $H$ is the space of divergence-free vector fields $u:D\to\R^2$ that belong to~$L^2(D)$ and whose second component vanishes on the boundary~$\p D$. 
	\item $V$ is the intersection of~$H$ and~$H_0^1(D)$. 
	\item We write $\|\cdot\|$ for the $L^2$ norm and~$\|\cdot\|_s$ for the $H^s$ norm. For all other spaces, the norm will be specified by an appropriate subscript (e.g., $\|\cdot\|_{L^p}$). 
\end{itemize}
For any $\varkappa>0$, we write $w_\varkappa:H\to\R$ for the function $w_\varkappa(u)=\exp(\varkappa\nu\|v\|^2)$. Given a Polish space~$X$, we denote by $\BB(X)$ its Borel $\sigma$-algebra and by~$\PP(X)$ the set of probability measures on~$(X,\BB(X))$. If $\mu\in\PP(X)$ and $f:X\to\R$ is a $\mu$-integrable function, then we write $\langle f,\mu\rangle$ for the integral of~$f$ against~$\mu$.

\section{Main results}
\label{s-MR}

\subsection{Setting and construction of estimator}
\label{s-setting}
Let us denote by~$H$ the space of vector fields $u=(u_1,u_2)\in L^2(D,\R^2)$ such that $\diver u=0$ in~$D$ and $u_2=0$ on the horizontal boundary of~$D$ and introduce the space $V=H\cap H_0^1$. We write $\Pi:L^2(D,\R^2)\to H$ for the orthogonal projection to~$H$. Let us recall that the {\it Stokes operator\/} defined formally by the formula $L=-\Pi\Delta$ is a positive self-adjoint operator with the domain $\DD(L):=V\cap H^2$. It is well known that~$L$ has a discrete spectrum, and we denote by~$\{e_j\}$ an orthonormal basis in~$H$ composed of the eigenfunctions of~$L$ with eigenvalues~$\{\alpha_j\}$ indexed in the increasing order. Applying~$\Pi$  to the Navier--Stokes system~\eqref{NS} and recalling that~$\eta$ has the form~\eqref{external-force}, we derive the following (nonlocal) SPDE
\begin{equation}\label{NS-reduced}
	\p_tu+\nu Lu+B(u)=\eta(t),
\end{equation}
where $B(u)=\Pi((u_1\p_1+u_2\p_2)u)$.  The Cauchy problem for~\eqref{NS-reduced} is well posed, so that for any $u_0\in H$ there is a unique solution $u(t,x)$ whose almost every trajectory belongs to the space 
\begin{equation*}
	\HH:=L_{\textrm{loc}}^2(\R_+,V)\cap C(\R_+,H)
\end{equation*}
and satisfies the initial condition
\begin{equation}\label{IC}
	u(0)=u_0.
\end{equation}
Moreover, the family of solutions corresponding to all possible initial conditions $u_0\in H$ form a Markov process, which is denoted by $(u_t,\IP_{u_0})$. Here, $u_t=u_t(\omega)$ stands for the trajectory and $\IP_{u_0}$ denotes the probability associated with the initial condition $u_0\in H$. If $\lambda\in\PP(H)$, then we denote by~$\IP_\lambda$ the probability associated with the initial measure~$\lambda$,
$$
\IP_\lambda(\cdot)=\int_H\IP_v(\cdot)\lambda(\dd v),
$$
and write~$\E_\lambda$ for the corresponding expectation. We refer the reader to the books~\cite[Chapter~15]{DZ1996} and~\cite[Chapter~2]{KS-book} for further details and proofs.

The derivation of estimator~\eqref{estimator} is based on an application of the It\^o formula. More precisely, applying the It\^o formula to the $L^2$ norm of a solution, we derive
\begin{equation}\label{ito-L2}
	\|u(t)\|^2+2\nu\int_0^t\|\nabla u(s)\|^2\dd s=\|u_0\|^2+Bt+2\int_0^t\bigl(u(s),\dd\zeta(s)\bigr),
\end{equation}
where~$B$ is defined by~\eqref{number-B}; see~\cite[Section~X.4]{VF1988} or \cite[Section~2.4]{KS-book}. Dividing both sides of~\eqref{ito-L2} by~$2\nu t$ and rearranging the terms, we obtain
\begin{equation}\label{xi-t}
\xi_t:=\frac1t\int_0^t\|\nabla u(s)\|^2\dd s
=\frac{B}{2\nu}+\frac{1}{2\nu t}\bigl(\|u_0\|^2-\|u(t)\|^2\bigr)
+\frac{1}{\nu t}\int_0^t\bigl(u(s),\dd\zeta(s)\bigr).	
\end{equation}
The right-most term in~\eqref{xi-t} is likely to converge to~$\frac{B}{2\nu}$ due to a priori estimates for solutions and law of large numbers for zero-mean martingales. It is therefore natural to define an estimator for~$\nu$ by the formula $\hat\nu_t=\frac{B}{2\xi_t}$. This expression coincides with~\eqref{estimator}. The following result shows that~$\hat\nu_t$ is well defined.

\begin{proposition}\label{p-correctness}
	Let $B>0$. Then, for any $u_0\in H$, there is a set of full measure $\Omega_*\subset\Omega$ such that, for any $\omega\in\Omega_*$, we have $\xi_t>0$ for all $t>0$. In particular, with probability~$1$, the estimator~$\hat\nu_t$ is well defined for all $t>0$. 
\end{proposition}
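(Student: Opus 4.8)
The plan is to prove that, with probability one, the solution $u(\cdot)$ does not vanish identically on any interval $[0,\delta]$ with $\delta>0$; the proposition is then immediate. To make the reduction precise, set $g(t):=\int_0^t\|\nabla u(s)\|^2\dd s$, so that $\xi_t=g(t)/t$ for $t>0$. For almost every realisation the trajectory lies in $L^2_{\mathrm{loc}}(\R_+,V)\cap C(\R_+,H)$, hence $g$ is continuous and nondecreasing with $g(0)=0$; consequently $\{\xi_t>0\text{ for all }t>0\}=\{g(\delta)>0\text{ for all }\delta>0\}$, and the complement of the latter event equals $\bigcup_{n\ge1}\{g(1/n)=0\}$. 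If $g(\delta_0)=0$ for some $\delta_0>0$, then $\|\nabla u(s)\|=0$ for a.e.\ $s\in[0,\delta_0]$; the Poincaré inequality on $V$ (equivalently, positivity of the first eigenvalue of the Stokes operator) turns this into $u(s)=0$ for a.e.\ such $s$, and continuity in $H$ then gives $u\equiv0$ on $[0,\delta_0]$. Thus it suffices to show that, for each fixed $\delta>0$, the event $A_\delta:=\{u\equiv0\text{ on }[0,\delta]\}$ is negligible.

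For this I would use the energy balance~\eqref{ito-L2}, which holds almost surely for all $t\ge0$ simultaneously. On $A_\delta$ one has $\|u(\delta)\|=0$, $\|u_0\|=0$, and $\int_0^\delta\|\nabla u(s)\|^2\dd s=0$, so evaluating~\eqref{ito-L2} at $t=\delta$ gives
\begin{equation*}
	0=B\delta+2\int_0^\delta\bigl(u(s),\dd\zeta(s)\bigr)\qquad\text{on }A_\delta .
\end{equation*}
Once we know that the stochastic integral on the right-hand side vanishes on $A_\delta$, this forces $B\delta=0$ on $A_\delta$, which is impossible because $B>0$ and $\delta>0$; hence $A_\delta$ is a null set, and the proof is complete.

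The one genuinely delicate point is the vanishing of that stochastic integral, since Itô integrals are defined only up to a set of measure zero, and I would settle it in either of two ways. One option is to note that the continuous local martingale $M_t:=\int_0^t(u(s),\dd\zeta(s))$ has quadratic variation $\langle M\rangle_t=\sum_j b_j^2\int_0^t(u(s),e_j)^2\dd s\le B\int_0^t\|u(s)\|^2\dd s$, which vanishes at $t=\delta$ on $A_\delta$; since a continuous local martingale is almost surely constant on every interval on which its quadratic variation is constant, $M$ is almost surely identically zero on $[0,\delta]$ on the event $A_\delta$. Alternatively — and this already proves $\IP(A_\delta)=0$ without invoking~\eqref{ito-L2} — one can argue from the variational form of~\eqref{NS-reduced}: on $A_\delta$ every term containing $u$ disappears, which forces $\zeta(t)=0$ in $H$ for all $t\in[0,\delta]$; taking the inner product with any $e_{j_0}$ for which $b_{j_0}>0$ (such an index exists since $B>0$) yields $\beta_{j_0}\equiv0$ on $[0,\delta]$, an event of probability zero. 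Either way, on the intersection of the full-measure sets used above we obtain $\xi_t>0$ for every $t>0$, so that $\hat\nu_t=B/(2\xi_t)$ is well defined there, which is the set $\Omega_*$ of the statement.
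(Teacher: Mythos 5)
Your proposal is correct and follows essentially the same route as the paper: the reduction (via continuity and monotonicity of $t\mapsto\int_0^t\|\nabla u(s)\|^2\dd s$ and the Poincar\'e inequality on $V$) to showing that $\{u\equiv0\text{ on }[0,\delta]\}$ is a null event, and your second argument --- that $u\equiv0$ forces $\zeta\equiv0$ through \eqref{NS-reduced}, hence $\beta_{j_0}\equiv0$ on $[0,\delta]$ for some $j_0$ with $b_{j_0}>0$, an event of probability zero --- is exactly the paper's proof. Your first route, via the energy identity \eqref{ito-L2} combined with the fact that a continuous local martingale is constant on intervals where its quadratic variation is constant, is a correct minor variant but not needed.
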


\begin{proof}
	Since $t\xi_t$ is an increasing function of~$t\ge0$, and $u\in\HH$ with probability~$1$, it suffices to prove that, for any $t>0$, we have 
\begin{equation}\label{P-xit}
\IP\bigl(\{\xi_t=0\}\cap\{u\in\HH\}\bigr)=0.
\end{equation}
Let $\omega\in\Omega$ be such that $u\in\HH$ and $\xi_t=0$. Then $\nabla u(s)=0$ for almost every $s\in[0,t]$. Since $u(s)\in V$ almost everywhere and $u$ is a continuous function of time with range in~$H$, we conclude that $u(s)=0$ for $s\in[0,t]$. It follows from~\eqref{NS-reduced} that $\eta(s)=0$ for $s\in[0,t]$. Since $B>0$, there is an integer $j\ge1$ such that $\beta_j(s)=0$ for $s\in[0,t]$. This event has probability zero, so that we arrive at~\eqref{P-xit}. 
\end{proof}

\subsection{Statement of the results}
\label{s-formulations}
To formulate our main results, we briefly recall some definitions. Let $\PP(\R)$ be the space of Borel probability measures on~$\R$ and let $\NN_\sigma\in\PP(\R)$ be the centered normal law with variance~$\sigma^2$. For a random variable~$\xi$, we write~$\DD(\xi)$ for its law. 

\begin{definition}
	We shall say that the estimator~$\hat\nu_t$ for~$\nu$ is:
	\begin{itemize}
	\item {\it strongly consistent\/} if
\begin{equation*}
	\IP_{u}\bigl\{\hat\nu_t\to\nu\mbox{ as }t\to\infty\bigr\}=1\quad\mbox{for any $u\in H$ and $\nu>0$};
\end{equation*}
	\item {\it asymptotically normal\/} if for any $\nu>0$ there is $\sigma_\nu>0$ such that
\begin{equation*}
	\DD\bigl(\sqrt{t}(\hat\nu_t-\nu)\bigr)
	\rightharpoonup\NN_{\sigma_\nu}
\quad\mbox{as $t\to\infty$ for any $u\in H$},
\end{equation*}
where the convergence holds in the weak topology of~$\PP(\R)$.
	\end{itemize}
\end{definition}
In what follows, we shall always assume that {\it the number~$B$ defined in~\eqref{number-B} is finite and strictly positive\/}. The following two results give the precise statement of the Main Theorem formulated in the Introduction. 

\begin{theorem}[Strong consistency]\label{thm: strong consistency}
For any $\nu\in(0,1]$ and $\e\in(0,\frac12)$, there is a random time $T\ge1$ such that 
	\begin{equation}\label{consistency hatnu}
		|\hat\nu_t - \nu| \leq t^{-\frac{1}{2}+\e} \quad \mbox{for $t\geq T$}.
	\end{equation}
Moreover, there is a number~$\varkappa>0$ not depending on~$\nu$ and~$\e$ such that, for any integer $m\ge1$ and any initial measure $\lambda\in\PP(H)$, we have
\begin{equation}\label{moments-T}
	\E_\lambda T^m\le C_m\langle w_\varkappa,\lambda\rangle,
\end{equation}
where $w_\varkappa(v)=\exp(\varkappa\nu\|v\|^2)$, and $C_m>0$ is a number depending on~$\nu$ and~$\e$. In particular, the estimator~$\hat\nu_t$ is strongly consistent. 
\end{theorem}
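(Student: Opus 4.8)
The plan is to control each of the two error terms in the decomposition~\eqref{xi-t}, namely
$$
\xi_t-\frac{B}{2\nu}=\frac{1}{2\nu t}\bigl(\|u_0\|^2-\|u(t)\|^2\bigr)+\frac{1}{\nu t}M_t,\qquad M_t:=\int_0^t\bigl(u(s),\dd\zeta(s)\bigr),
$$
and then to convert a bound on $|\xi_t-\frac{B}{2\nu}|$ into a bound on $|\hat\nu_t-\nu|=|\frac{B}{2\xi_t}-\nu|$ via the elementary inequality $|\frac{B}{2\xi_t}-\nu|=\frac{2\nu}{B}\xi_t^{-1}|\xi_t-\frac{B}{2\nu}|\cdot\frac{B}{2\nu}$, which shows that once $\xi_t$ is bounded away from zero (say $\xi_t\ge\frac{B}{4\nu}$ for large $t$), the estimator error is comparable to $|\xi_t-\frac{B}{2\nu}|$. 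So it suffices to prove a pathwise bound of the form $|\xi_t-\frac{B}{2\nu}|\le \tfrac{1}{4}t^{-\frac12+\e}$ for $t\ge T$, with $T$ satisfying the required moment estimate~\eqref{moments-T}.

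First I would handle the boundary term. Using the It\^o formula~\eqref{ito-L2} one sees $\|u(t)\|^2\le\|u_0\|^2+Bt+2M_t$, but a cleaner route is a standard exponential moment estimate for the stochastic Navier--Stokes system: there is $\varkappa>0$ such that $\E_\lambda\exp(\varkappa\nu\|u(t)\|^2)$ is bounded uniformly in $t\ge0$ (in fact by $C\langle w_\varkappa,\lambda\rangle$), which is where the weight $w_\varkappa$ in~\eqref{moments-T} enters. From this and the Borel--Cantelli lemma, $\|u(t)\|^2=o(t^{\delta})$ almost surely for any $\delta>0$, with the exceptional time having all moments controlled by $\langle w_\varkappa,\lambda\rangle$; hence $\frac{1}{2\nu t}(\|u_0\|^2-\|u(t)\|^2)$ is $O(t^{-1+\delta})$, which is far better than $t^{-\frac12+\e}$.

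Next I would treat the martingale term $M_t$, whose quadratic variation is $\langle M\rangle_t=\sum_j b_j^2\int_0^t(u(s),e_j)^2\dd s\le B\int_0^t\|u(s)\|^2\dd s$. From the same exponential estimates one gets $\E_\lambda\|u(s)\|^2\le C\langle w_\varkappa,\lambda\rangle$ uniformly in $s$, so $\E_\lambda\langle M\rangle_t\le CBt\langle w_\varkappa,\lambda\rangle$. The strong law of large numbers for continuous martingales then gives $M_t/\langle M\rangle_t\to0$, but for the explicit rate $t^{-\frac12+\e}$ I would instead use a maximal-inequality / Borel--Cantelli argument along a geometric subsequence $t_n=2^n$: by the Burkholder--Davis--Gundy inequality, $\E_\lambda\bigl(\sup_{t\le t_n}|M_t|^{2p}\bigr)\le C_p\,\E_\lambda\langle M\rangle_{t_n}^p\le C_p'(t_n)^p\langle w_\varkappa,\lambda\rangle$ for every $p\ge1$ (here one needs $2p$-th moments of $\|u(s)\|^2$, again available from exponential bounds); choosing $p$ large depending on $\e$ and summing the resulting probabilities $\IP_\lambda(\sup_{t\le t_n}|M_t|>t_n^{\frac12+\e/2})\le C_p'' t_n^{-p\e}\langle w_\varkappa,\lambda\rangle$ yields, via Borel--Cantelli, a random index $N$ with all moments bounded by $C\langle w_\varkappa,\lambda\rangle$ such that $|M_t|\le t^{\frac12+\e/2}$ for $t\ge t_N$; interpolating over each dyadic block (using monotonicity in $n$ of $t_n$ and that $t_{n+1}/t_n=2$) upgrades this to all $t\ge t_N$. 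Combining, $\frac{1}{\nu t}|M_t|=O(t^{-\frac12+\e/2})$ almost surely.

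Putting the pieces together, set $T$ to be the smallest time $\ge1$ beyond which both error terms are below $\tfrac18 t^{-\frac12+\e}$ and $\xi_t\ge\frac{B}{4\nu}$; this $T$ is a maximum of finitely many of the random times produced above, so~\eqref{moments-T} follows from the moment bounds on each of them together with the elementary fact that $\E(\max_i T_i)^m\le\sum_i\E T_i^m$. The resulting bound $|\hat\nu_t-\nu|\le t^{-\frac12+\e}$ for $t\ge T$ is exactly~\eqref{consistency hatnu}, and since $T<\infty$ almost surely the estimator is strongly consistent. The main obstacle is obtaining the sharp rate for the martingale term: a naive application of the martingale SLLN gives consistency but not the $t^{-\frac12+\e}$ rate, so one must run the BDG-plus-dyadic-Borel--Cantelli scheme carefully, and this is precisely the step where the uniform-in-time polynomial (indeed exponential) moment bounds for $\|u(s)\|$, and hence the weight $w_\varkappa$, are essential.
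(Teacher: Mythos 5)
Your overall scheme is essentially the paper's own: reduce $|\hat\nu_t-\nu|$ to $|\xi_t-\tfrac{B}{2\nu}|$ once $\xi_t$ is bounded below, split the error into the boundary term and the martingale term, and control the martingale by BDG moments plus Chebyshev and Borel--Cantelli along a discrete set of times (you use dyadic blocks where the paper uses unit blocks with the increments $V_k=\sup_{k\le t\le k+1}|M_t-M_k|$; both work and give the same moment bounds for the random time). The treatment of $M_t$ and the transfer of the moment estimates to $T$ are fine.

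There is, however, one step that does not hold as written: for the boundary term you claim that the uniform-in-time bound on $\E_\lambda\exp(\varkappa\nu\|u(t)\|^2)$ at each \emph{fixed} $t$, combined with Borel--Cantelli, yields $\|u(t)\|^2=o(t^\delta)$ almost surely \emph{for all} $t$. Borel--Cantelli applied to fixed-time marginals only controls $\|u(t)\|^2$ along a countable sequence of times (say the integers); it gives no quantitative bound on $\sup_{t\in[k,k+1]}\|u(t)\|^2$, and pathwise continuity alone does not convert the discrete bound into a bound valid for every $t\ge T$, which is what \eqref{consistency hatnu} requires. This is precisely the point where the paper invokes a maximal-type estimate: the supermartingale inequality \eqref{supermartingale} (Proposition~\ref{p-supermartingale}), applied through the Markov property at time $k$, bounds $\IP_\lambda\{\sup_{k\le t\le k+1}\|u(t)\|^2>\tfrac{2\nu}{3}k^{1/2}\}$ and only then is Borel--Cantelli used. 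Your argument can be repaired either this way, or by using the energy identity \eqref{ito-L2} on $[k,k+1]$, which gives $\sup_{k\le t\le k+1}\|u(t)\|^2\le\|u(k)\|^2+B+2\sup_{k\le t\le k+1}|M_t-M_k|$, so that your discrete-time bound on $\|u(k)\|^2$ together with your BDG-based control of the martingale suprema fills the gaps between sample times; but some such maximal estimate must be supplied, and with only the fixed-time exponential moments the step as stated fails.
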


Let us  emphasise that, in Theorem~\ref{thm: strong consistency}, we only assume that $B>0$, without requiring the positivity of all the coefficients~$b_j$. In particular, the Markov process associated with~\eqref{NS-reduced} does not need to be mixing. The latter property is, however, important in the proof of asymptotic normality. Let us denote by~$\varPhi_\sigma$ the distribution function of the centred normal law with variance~$\sigma^2>0$. 

\begin{theorem}[Asymptotic normality]\label{thm:asympto}
In addition to the above hypotheses, let us assume that $b_j>0$ for any $j\ge1$. Then there is a constant $\sigma_\nu>0$ depending only on $\nu\in(0,1]$ such that, for any number $\e>0$ and an appropriate increasing function $C_\e:\R_+\to\R_+$,  we have 
	\begin{equation}\label{AN-with-rate}
	\sup_{z\in\R}\,
	\bigl|\IP_\lambda\bigl\{\sqrt{t}\,\bigl(\hat\nu_t-\nu\bigr)\le z\bigr\}-\varPhi_{\sigma_\nu}(z)\bigr|\le C_\e\bigl(\langle w_\varkappa,\lambda\rangle\bigr)\,t^{-\frac14+\e},
	\end{equation}
	where $t\ge1$ is arbitrary, $\lambda\in\PP(H)$ is the law of the initial state, and the number $\varkappa>0$ does not depend on~$\nu$, $\lambda$, and~$\e$. In particular, the estimator~$\hat\nu_t$ is asymptotically normal. 
\end{theorem}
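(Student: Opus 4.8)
The plan is to start from the explicit formula for the estimator. From $\hat\nu_t=\frac{B}{2\xi_t}$ and the identity~\eqref{xi-t}, a first-order Taylor expansion of $x\mapsto\frac{B}{2x}$ around $x=\frac{B}{2\nu}$ gives
\begin{equation}\label{AN-linearisation}
	\sqrt{t}\,(\hat\nu_t-\nu)=-\frac{2\nu^2}{B}\sqrt{t}\Bigl(\xi_t-\frac{B}{2\nu}\Bigr)+\text{(remainder)}.
\end{equation}
Using~\eqref{xi-t}, the leading term splits into a boundary contribution $\frac{1}{2\nu t}(\|u_0\|^2-\|u(t)\|^2)$ and a martingale contribution $M_t:=\frac{1}{\nu t}\int_0^t(u(s),\dd\zeta(s))$. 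The first step is to show that the boundary term, multiplied by $\sqrt t$, tends to zero; this follows from the a priori moment bounds $\E_\lambda\|u(t)\|^{2m}\le C_m\langle w_\varkappa,\lambda\rangle$ already implicit in Theorem~\ref{thm: strong consistency}, which give $\sqrt t\cdot\frac{1}{t}\|u(t)\|^2=O(t^{-1/2+\e})$ almost surely and in the appropriate sense. Likewise the Taylor remainder in~\eqref{AN-linearisation} is quadratic in $\xi_t-\frac{B}{2\nu}$ and hence, by the consistency rate $t^{-1/2+\e}$, is of order $t^{-1/2+2\e}$ after multiplication by $\sqrt t$, so it is negligible. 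Thus everything reduces to a central limit theorem for $\sqrt t\,M_t=\frac{1}{\nu\sqrt t}\int_0^t(u(s),\dd\zeta(s))$, with an explicit rate.

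The heart of the proof is therefore a quantitative CLT for the additive functional $\sqrt t\,M_t$. Its quadratic variation is $\langle\sqrt t\,M\rangle_t=\frac{1}{\nu^2 t}\int_0^t\sum_j b_j^2(u(s),e_j)^2\,\dd s=\frac{1}{\nu^2 t}\int_0^t G(u(s))\,\dd s$, where $G(v):=\sum_j b_j^2(v,e_j)^2$ is a bounded continuous (indeed quadratic) functional on $H$. Under the hypothesis $b_j>0$ for all $j$, the Markov process generated by~\eqref{NS-reduced} has a unique stationary measure $\mmu$ and is exponentially mixing (this is the standard ergodicity theory for the randomly forced 2D Navier--Stokes system, as in~\cite{KS-book}); hence the law of large numbers gives $\langle\sqrt t\,M\rangle_t\to\frac{1}{\nu^2}\langle G,\mmu\rangle=:\sigma_\nu^2$ almost surely, and the mixing provides a polynomial (in fact one can get any rate $t^{-1+\e}$, and after taking square roots $t^{-1/2+\e}$) control of the fluctuation $\langle\sqrt t\,M\rangle_t-\sigma_\nu^2$ in $L^p$. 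Positivity $\sigma_\nu^2>0$ follows because $G\ge c\,\|v\|^2$ near the origin is not quite enough; rather one uses that $G(v)\ge b_1^2(v,e_1)^2$ and that $\mmu$ is not concentrated on the hyperplane $(v,e_1)=0$ (which would contradict the nondegeneracy of the noise), so $\langle G,\mmu\rangle>0$. Once the bracket is pinned down, a martingale CLT with convergence rate — for instance a Berry--Esseen-type bound for continuous martingales whose bracket converges at a known rate, combined with the mixing-based bound on $\E_\lambda|\langle\sqrt t\,M\rangle_t-\sigma_\nu^2|$ — yields
\begin{equation*}
	\sup_{z\in\R}\bigl|\IP_\lambda\{\sqrt t\,M_t\le z\}-\varPhi_{\sigma_\nu}(z)\bigr|\le C_\e(\langle w_\varkappa,\lambda\rangle)\,t^{-\frac14+\e},
\end{equation*}
the exponent $-\frac14$ being the typical loss when one converts an $L^1$ rate $t^{-\frac12}$ on the bracket into a Kolmogorov-distance rate via the standard time-change/Skorokhod embedding argument. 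Assembling the three pieces — negligibility of the boundary term, negligibility of the Taylor remainder, and the quantitative martingale CLT — and absorbing all the lower-order errors into the $t^{-\frac14+\e}$ bound gives~\eqref{AN-with-rate}.

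The main obstacle, I expect, is the quantitative control of the bracket $\langle\sqrt t\,M\rangle_t$: one needs not merely the ergodic convergence $\frac1t\int_0^t G(u(s))\,\dd s\to\langle G,\mmu\rangle$ but an explicit rate, with the dependence on the initial law expressed through $\langle w_\varkappa,\lambda\rangle$, and this must be robust enough to feed into a Berry--Esseen bound. This requires the exponential mixing of the Navier--Stokes process in a weighted total-variation or dual-Lipschitz metric (valid because all $b_j>0$), together with the uniform-in-time exponential moment estimates on $\|u(t)\|^2$ that control the unboundedness of $G$ only mildly — here $G$ is bounded on $H$, which helps — and the non-stationarity of the initial data. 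Turning the $L^2$ (or $L^1$) rate for the bracket into a Kolmogorov-distance rate for the martingale itself is the other delicate point, and is the source of the $-\frac14$ rather than $-\frac12$ exponent; making that step rigorous, presumably via a coupling/time-change representation of the continuous martingale as a time-changed Brownian motion, is where the technical bulk of Section~\ref{s-clt} will lie.
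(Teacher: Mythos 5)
Your overall strategy coincides with the paper's: reduce $\sqrt{t}\,(\hat\nu_t-\nu)$ to $\sqrt{t}\,(\xi_t-(2\nu)^{-1}B)$ by a delta-method argument, discard the term $(2\nu t)^{-1}(\|u_0\|^2-\|u(t)\|^2)$ via moment/growth estimates, and prove a quantitative CLT for the martingale part through a law of large numbers with rate for its bracket, the limit variance being positive by nondegeneracy of the noise. The differences lie in the implementation of the two key steps. First, the paper carries out the reduction entirely at the level of distribution functions: Proposition~\ref{p-slutsky} is a Slutsky/delta-method statement with an explicit Kolmogorov-distance rate, and the boundary and remainder terms are absorbed through a comparison inequality of the form $\sup_z|F_X(z)-\varPhi_\sigma(z)|\le\sup_z|F_Y(z)-\varPhi_\sigma(z)|+\IP\{|X-Y|>\e\}+c_\sigma\e$. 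Your Taylor expansion instead invokes the almost sure consistency rate, which holds only after a random time $T$; to reach \eqref{AN-with-rate}, which is uniform in $z$ and quantitative in $\langle w_\varkappa,\lambda\rangle$, you must convert these a.s.\ statements into probability bounds (using the moment estimates \eqref{moments-T} for $T$) and then into Kolmogorov-distance bounds --- repairable, but not automatic, and this is exactly what Proposition~\ref{p-slutsky} is designed to do. Second, for the martingale CLT the paper discretizes time and applies a Hall--Heyde-type theorem (Proposition~\ref{p-clt-lln}, imported from \cite{shirikyan-ptrf2006}) driven by the conditional variance $V_n^2=\sum_k\E_\lambda\{(M_k-M_{k-1})^2\,|\,\FF_{k-1}\}$; this requires the exponential bound \eqref{ineq:expX_k} on increments (Novikov condition plus Cauchy--Schwarz) and a weighted H\"older bound on $g(v)=\E_vX_1^2$ so that the mixing-based LLN with rate applies. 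You keep continuous time, work with the true bracket $\frac1t\int_0^tG(u(s))\,\dd s$, $G(v)=\sum_jb_j^2(v,e_j)^2$, and invoke a Berry--Esseen bound for continuous martingales via a Dambis--Dubins--Schwarz time change. That route is viable and would produce the same exponent $-\frac14+\e$, but the quantitative time-change lemma is precisely the technical heart of the matter and you leave it as a black box, together with the dependence of its constants on the initial law.

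Two smaller points need correction. The claim that $G$ is bounded on $H$ is false: $G(v)\le(\sup_jb_j^2)\|v\|^2$ grows quadratically, which is exactly why the paper works with weighted norms and uses the exponential moment estimate \eqref{expo-estimate} to control the bracket fluctuations; your argument survives, but only within that weighted framework. For the positivity of $\sigma_\nu^2$, your single-mode argument requires knowing that $\mu_\nu$ does not charge the hyperplane $\{(v,e_1)=0\}$, which itself needs a proof; since all $b_j>0$, it is simpler to observe, as the paper does, that $\langle g,\mu_\nu\rangle=\sum_jb_j^2\int_Hv_j^2\,\mu_\nu(\dd v)$ can vanish only if $\mu_\nu=\delta_0$, which is not an invariant measure.
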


These two theorems are established in Sections~\ref{s-lln} and~\ref{s-clt}. Here we present the main ideas of their proofs.

\subsection{Scheme of the proofs}
\label{s-scheme}

\subsubsection*{Consistency}
Let us recall that the random process~$\xi_t$ is defined by~\eqref{xi-t}, so that $\hat\nu_t=\frac{B}{2\xi_t}$. It follows that 
\begin{equation}\label{nut-nu}
	|\hat\nu_t-\nu|=\frac{\nu}{\xi_t}\,\biggl|\xi_t-\frac{B}{2\nu}\biggr|.
\end{equation}
We see that the required convergence will be established if we prove that the expression $|\xi_t-(2\nu)^{-1}B|$ does not exceed~$t^{-\frac12+\e}$ for $t\ge T$, where $T=T_{\e,\nu}>0$ is a random time all of whose moments are finite. Setting $M_t=\int_0^t(u(s),\dd\zeta(s))$, we can write
\begin{equation*}
	\biggl|\xi_t-\frac{B}{2\nu}\biggr|
	\le \frac{1}{2\nu t}\|u_0\|^2+\frac{1}{2\nu t}\|u(t)\|^2+\frac{1}{\nu t}|M_t|.
\end{equation*}
The fact that the first two terms are bounded by~$t^{-\frac12+\e}$ follows from a priori estimates for solutions. The last term will be treated with the help of a technique based on estimates of moments; cf.~\cite[Section~12]{lamperti1996}.

\subsubsection*{Asymptotic normality}
A well-known consequence of Slutsky's theorem (see~\cite{JP2003}) is that the property of asymptotic normality is preserved under smooth maps. In Section~\ref{s-slutsky}, we prove a version of that result with an explicit estimate for the rate of convergence. This and the explicit form of the estimator~\eqref{estimator} reduce~\eqref{AN-with-rate} to the proof of the inequality 
\begin{equation}\label{AN-reduced}
	\sup_{z\in\R}\,
	\bigl|\IP_\lambda\bigl\{\sqrt{t}\,\bigl(\xi_t-(2\nu)^{-1}B\bigr)\le z\bigr\}-\varPhi_{\sigma_\nu}(z)\bigr|\le \widetilde C_\e\bigl(\langle w_\varkappa,\lambda\rangle\bigr)\,t^{-\frac14+\e},
\end{equation}
where $\widetilde C_\e:\R_+\to\R_+$ is an increasing function. Recalling~\eqref{xi-t}, we see that $\xi_t-(2\nu)^{-1}B$ differs from the time-average of a martingale by a negligible term. Various results on the CLT for discrete-time martingales are presented in~\cite{HH1980}. In particular, as is proved in~\cite[Section~3.5]{HH1980}, the CLT for martingales can be reduced to the law of large numbers for the conditional variance. The latter will be established with the help of the mixing property of the flow.

\section{Consistency}
\label{s-lln}
In this section, we establish the strong consistency of the estimator~$\hat\nu_t$. To this end, we shall need two auxiliary results that are discussed in the next subsection. We always assume that the hypotheses of Theorem~\ref{thm: strong consistency} are fulfilled. 

\subsection{Auxiliary results}
\label{s-consistency-auxiliry}
In what follows, we assume that the numbers~$b_j$ entering~\eqref{external-force} are fixed and do not follow the dependence of unessential constants on them. Let us denote by $\alpha_1>0$ the smallest eigenvalue of the Stokes operator~$L$ and define 
$$
\gamma=\frac14\alpha_1\Bigl(\,\sup_{j\ge1}b_j^2\Bigr)^{-1}.
$$
The following result follows from Proposition~2.4.10 in~\cite{KS-book}. 

\begin{proposition}\label{p-supermartingale}
	For any numbers $\nu\in(0,1]$, $\rho>0$ and any $H$-valued random initial condition~$u_0$ such that $\E\,\|u_0\|^2<\infty$, the solution of problem~\eqref{NS-reduced}, \eqref{IC} satisfies the inequality 
\begin{equation}\label{supermartingale}
\IP\bigl\{\|u(t)\|^2\le \|u_0\|^2 +Bt+\rho\mbox{ for }t\ge0\bigr\}\ge 1-e^{-\gamma\nu\rho}. 
\end{equation}
\end{proposition}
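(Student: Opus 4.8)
The plan is to deduce~\eqref{supermartingale} from the energy balance~\eqref{ito-L2} by an exponential supermartingale argument. Write $N_t=\|u(t)\|^2-\|u_0\|^2-Bt$ and $M_t=\int_0^t\bigl(u(s),\dd\zeta(s)\bigr)$; differentiating~\eqref{ito-L2} shows that $N_t$ is a continuous semimartingale with
\[
\dd N_t=-2\nu\|\nabla u(t)\|^2\,\dd t+2\,\dd M_t,\qquad
\dd\langle N\rangle_t=4\,\dd\langle M\rangle_t=4\sum_{j\ge1}b_j^2\,(u(t),e_j)^2\,\dd t,
\]
the last identity holding because $\zeta=\sum_j b_j\beta_je_j$ with $\{e_j\}$ orthonormal in~$H$. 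I would then fix a parameter $\lambda>0$ and apply the It\^o formula to $Z_t:=\exp(\lambda N_t)$, obtaining
\[
\dd Z_t=Z_t\Bigl(-2\lambda\nu\|\nabla u(t)\|^2+2\lambda^2\sum_{j\ge1}b_j^2(u(t),e_j)^2\Bigr)\dd t+2\lambda Z_t\,\dd M_t.
\]
The Poincar\'e inequality $\|\nabla u\|^2\ge\alpha_1\|u\|^2$ (with $\alpha_1$ the smallest eigenvalue of the Stokes operator) together with the bound $\sum_j b_j^2(u,e_j)^2\le\bigl(\sup_j b_j^2\bigr)\|u\|^2$ shows that the drift coefficient does not exceed $2\lambda\bigl(\lambda\sup_j b_j^2-\nu\alpha_1\bigr)\|u(t)\|^2\,Z_t$, which is $\le0$ as soon as $\lambda\le\nu\alpha_1(\sup_j b_j^2)^{-1}=4\gamma\nu$. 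Hence, for $\lambda=4\gamma\nu$, the process $Z_t$ is a nonnegative local supermartingale.

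The key remaining point is to upgrade this to a genuine supermartingale. To that end I would introduce the stopping time $\tau_\rho=\inf\{t\ge0:N_t>\rho\}$ and note that the event in~\eqref{supermartingale} is exactly $\{\tau_\rho=\infty\}$. On the stochastic interval $[0,\tau_\rho]$ one has $N_s\le\rho$, so $Z_{s\wedge\tau_\rho}\le e^{\lambda\rho}$ is bounded, and at the same time $\|u(s)\|^2\le\|u_0\|^2+Bs+\rho$; combined with the hypothesis $\E\|u_0\|^2<\infty$, this makes the quadratic variation of $\int_0^{\cdot\wedge\tau_\rho}2\lambda Z_s\,\dd M_s$ integrable on every bounded time interval, so that this stochastic integral is a true martingale and $Z_{t\wedge\tau_\rho}$ is a bounded supermartingale. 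Therefore $\E Z_{t\wedge\tau_\rho}\le\E Z_0=1$ for every $t\ge0$. By continuity of $s\mapsto\|u(s)\|$ (valid since $u\in C(\R_+,H)$), on $\{\tau_\rho\le t\}$ we have $N_{\tau_\rho}=\rho$, hence $Z_{t\wedge\tau_\rho}=e^{\lambda\rho}$ there, and so $e^{\lambda\rho}\,\IP\{\tau_\rho\le t\}\le\E Z_{t\wedge\tau_\rho}\le1$. Letting $t\to\infty$ and using $\lambda=4\gamma\nu$ gives $\IP\{\tau_\rho<\infty\}\le e^{-4\gamma\nu\rho}\le e^{-\gamma\nu\rho}$, so $\IP\{\tau_\rho=\infty\}\ge1-e^{-\gamma\nu\rho}$, which is~\eqref{supermartingale}.

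I expect the main obstacle to be precisely the passage from the local to the global supermartingale property: a priori $Z_t$ is the exponential of a quantity quadratic in~$u$, for which exponential moments need not exist, so one cannot simply take expectations of~$Z_t$. Stopping at~$\tau_\rho$, where $Z$ is automatically bounded, is what makes this step painless, and the role of the hypothesis $\E\|u_0\|^2<\infty$ is only to guarantee that the stopped stochastic integral is a true martingale. Two routine checks also have to be made along the way: that the It\^o formula may legitimately be composed with~$\exp$ (it may, since $N_t$ is a continuous semimartingale), and that the inequality $\|\nabla u\|^2\ge\alpha_1\|u\|^2$ is invoked only for a.e.~$t$, where $u(t)\in V$. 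The statement is, in fact, quoted from Proposition~2.4.10 in~\cite{KS-book}, whose proof follows these lines.
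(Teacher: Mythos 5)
Your proof is correct. The paper gives no argument of its own here --- it simply invokes Proposition~2.4.10 of~\cite{KS-book} --- and your exponential-supermartingale argument (Poincar\'e bound on the drift, stopping at the level-$\rho$ crossing time to justify taking expectations, then Chebyshev) is essentially the proof of that cited result, even yielding the slightly stronger bound $e^{-4\gamma\nu\rho}$ in place of $e^{-\gamma\nu\rho}$.
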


To formulate the second result, let us recall that $M_t$ stands for the martingale defined by the last term in~\eqref{xi-t}. 

\begin{proposition}\label{p-martingale}
There is a number $\varkappa>0$ not depending on~$\nu\in(0,1]$ such that, for any  $p\ge1$, one can find a number $C_p>0$ with the following property: if an $H$-valued initial condition~$u_0$ is such that $\E \exp(\varkappa\nu\|u_0\|^2)<\infty$, then for any $t\ge0$ the martingale~$M_t$ satisfies the inequalities
	\begin{align}
		\E\,\Bigl(\,\sup_{0\le s\le t}|M_s|^{2p}\Bigr)&\le C_p\,t^p\bigl(1+t^{-1}\,\E\,e^{\varkappa\nu\|u_0\|^2}\bigr), \label{Mt-moments}\\
		\E\,\Bigl(\,\sup_{t\le s\le t+1}|M_s-M_t|^{2p}\Bigr)&\le C_p\bigl(1+e^{-\varkappa\nu^2 t}\,\E\,e^{\varkappa\nu\|u_0\|^2}\bigr).\label{Mt-Ms}
	\end{align} 
\end{proposition}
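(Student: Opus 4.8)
The plan is to combine the Burkholder--Davis--Gundy (BDG) inequality with the energy balance~\eqref{ito-L2}, a bootstrap argument, and the exponential a priori estimates underlying Proposition~\ref{p-supermartingale}. Since $M_t=\sum_{j\ge1}b_j\int_0^t(u(s),e_j)\,\dd\beta_j(s)$ with independent Brownian motions~$\beta_j$, its quadratic variation satisfies $\langle M\rangle_t=\int_0^t\sum_jb_j^2(u(s),e_j)^2\,\dd s\le\bigl(\sup_jb_j^2\bigr)Y_t$, where $Y_t:=\int_0^t\|u(s)\|^2\,\dd s$. By the BDG inequality,
\begin{equation}\label{p-mart-bdg}
	\E\Bigl(\sup_{0\le s\le t}|M_s|^{2p}\Bigr)\le C_p\,\E\,\langle M\rangle_t^{\,p}\le C_p'\,\E\,Y_t^{\,p},
\end{equation}
so both inequalities reduce to moment bounds for~$Y_t$ and for its increments over unit time intervals. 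On the other hand, dividing~\eqref{ito-L2} by~$2\nu$ and using the Poincar\'e inequality $\|u\|^2\le\alpha_1^{-1}\|\nabla u\|^2$ yields
\begin{equation}\label{p-mart-Yt}
	Y_t\le\frac{1}{2\nu\alpha_1}\Bigl(\|u_0\|^2+Bt+2\sup_{0\le s\le t}|M_s|\Bigr).
\end{equation}

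To prove~\eqref{Mt-moments} for $t\ge1$ I would feed~\eqref{p-mart-Yt} back into~\eqref{p-mart-bdg}: setting $W_t=\E\,Y_t^p$ and using $\E\sup_{s\le t}|M_s|^p\le(C_p'W_t)^{1/2}$, inequality~\eqref{p-mart-Yt} gives $W_t\le C_p(\E\,\|u_0\|^{2p}+t^p)+C_pW_t^{1/2}$, whence $W_t\le C_p(\E\,\|u_0\|^{2p}+t^p+1)$ by an elementary inequality; here, and below, the constants are allowed to depend on~$\nu$, and to run the argument one first works with a stopped martingale so that the moments are a priori finite and then passes to the limit by Fatou's lemma. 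Since $x^p\le C_p(\varkappa\nu)^{-p}e^{\varkappa\nu x}$ for $x\ge0$, one may replace $\E\,\|u_0\|^{2p}$ by $C_p\,\E\,e^{\varkappa\nu\|u_0\|^2}$; absorbing the additive~$1$ into~$t^p$ and using $\E\,e^{\varkappa\nu\|u_0\|^2}\le t^{p-1}\,\E\,e^{\varkappa\nu\|u_0\|^2}$ gives the right-hand side of~\eqref{Mt-moments}. For $0<t<1$ no bootstrapping is needed: one has $Y_t\le t\sup_{0\le s\le1}\|u(s)\|^2$, so the classical a priori bound $\E\sup_{0\le s\le1}\|u(s)\|^{2p}\le C_p(1+\E\,\|u_0\|^{2p})$ (see~\cite[Section~2.4]{KS-book}) together with $e^{\varkappa\nu\|u_0\|^2}\ge1$ gives $\E\,Y_t^p\le C_p\,t^p\,\E\,e^{\varkappa\nu\|u_0\|^2}\le C_p\,t^{p-1}\,\E\,e^{\varkappa\nu\|u_0\|^2}$; combined with~\eqref{p-mart-bdg} this is~\eqref{Mt-moments}.

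For~\eqref{Mt-Ms} I would use the Markov property. For $s\in[t,t+1]$ the process $s\mapsto M_s-M_t$ is a continuous martingale vanishing at $s=t$ with $\langle M\rangle_s-\langle M\rangle_t\le(\sup_jb_j^2)\int_t^s\|u(r)\|^2\,\dd r$, so BDG gives $\E\sup_{t\le s\le t+1}|M_s-M_t|^{2p}\le C_p\,\E\bigl(\int_t^{t+1}\|u(r)\|^2\,\dd r\bigr)^p$. Conditioning on the past up to time~$t$ and using the Markov property, the last expectation equals $\E\,g(u(t))$, where $g(v):=\E_v\bigl(\int_0^1\|u(r)\|^2\,\dd r\bigr)^p\le C_p\,e^{\varkappa\nu\|v\|^2}$ by the $[0,1]$ a priori bound just used. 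It therefore remains to show that $\E\,e^{\varkappa\nu\|u(t)\|^2}\le C\bigl(1+e^{-\varkappa\nu^2t}\,\E\,e^{\varkappa\nu\|u_0\|^2}\bigr)$. Applying It\^o's formula to $t\mapsto e^{\varkappa\nu\|u(t)\|^2}$ and, for~$\varkappa$ small enough, absorbing the It\^o correction into the viscous dissipation via the Poincar\'e inequality, one gets $\frac{\dd}{\dd t}\,\E\,e^{\varkappa\nu\|u(t)\|^2}\le C_0-c\nu^2\,\E\,e^{\varkappa\nu\|u(t)\|^2}$ with $C_0,c>0$ independent of~$\nu\in(0,1]$, and Gronwall's lemma gives the required decay; this estimate is essentially the one behind Proposition~\ref{p-supermartingale} (cf.~\cite[Section~2.4]{KS-book}). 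Putting these together yields~\eqref{Mt-Ms}. Finally, $\varkappa$ is taken to be the smallest of the finitely many thresholds appearing above, so that all sub-estimates hold with this single value.

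The step I expect to require the most care is not conceptual but the bookkeeping of the right-hand sides: extracting the precise factor $t^p\bigl(1+t^{-1}\E\,e^{\varkappa\nu\|u_0\|^2}\bigr)$ in~\eqref{Mt-moments}, which is what forces the separate treatment of $t\ge1$ and $t<1$ and the use of the self-improving inequality rather than a one-shot estimate; and obtaining in~\eqref{Mt-Ms} a bound that is \emph{uniform} in~$t$ with an exponentially decaying dependence on the initial data at the sharp rate~$\nu^2t$, which genuinely requires the dissipation estimate and cannot be read off from~\eqref{p-mart-Yt}. A secondary point is the localisation needed to justify the bootstrap before the relevant moments are known to be finite.
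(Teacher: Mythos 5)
Your proof is correct, but for \eqref{Mt-moments} it takes a genuinely different route from the paper. The paper's argument is a one-shot estimate: after the BDG inequality it bounds the quadratic variation by $B\int_0^t\|u(s)\|^2\,\dd s$, applies H\"older to obtain $c\,t^{p-1}\int_0^t\E\|u(s)\|^{2p}\,\dd s$, and then invokes the exponential moment bound $\E\,e^{\varkappa\nu\|u(s)\|^2}\le e^{-\varkappa\nu^2 s}\,\E\,e^{\varkappa\nu\|u_0\|^2}+c_3$ (Proposition~2.4.9 in~\cite{KS-book}, i.e.\ \eqref{expo-estimate}) pointwise in $s$; integrating gives \eqref{Mt-moments} with no case distinction and no bootstrap. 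You instead control $\int_0^t\|u\|^2\,\dd s$ pathwise through the energy balance \eqref{ito-L2} and the Poincar\'e inequality, dominating it by $(2\nu\alpha_1)^{-1}(\|u_0\|^2+Bt+2\sup_{s\le t}|M_s|)$, and close a self-improving inequality via BDG and Cauchy--Schwarz; this is sound (given the localisation/Fatou step you flag and $\E\|u_0\|^{2p}<\infty$, which follows from the exponential assumption), and it avoids the dissipation-decay estimate for that part, at the cost of the $t\ge1$ versus $t<1$ split and the stopping argument. For \eqref{Mt-Ms} the two proofs are essentially the same: you route the bound through the Markov property and $g(v)=\E_v\bigl(\int_0^1\|u(r)\|^2\,\dd r\bigr)^p\le C\,e^{\varkappa\nu\|v\|^2}$, whereas the paper simply integrates \eqref{expo-estimate} over $[t,t+1]$ after BDG and H\"older, so the Markov step is unnecessary though harmless; likewise your It\^o--Gronwall derivation of the decay of $\E\,e^{\varkappa\nu\|u(t)\|^2}$ reproduces exactly the cited Proposition~2.4.9 of~\cite{KS-book} (your sketch in fact gives rate $c\nu$, which implies the $\nu^2$ rate for $\nu\le1$), so you may as well quote it. The one small point to tidy is the auxiliary bound $\E\sup_{0\le s\le1}\|u(s)\|^{2p}\le C_p(1+\E\|u_0\|^{2p})$: it is standard and follows, for instance, from \eqref{supermartingale} by integrating the exponential tail in~$\rho$, but you should either point to a precise statement or include that one-line derivation.
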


\begin{proof}
In view of the Burkholder--Davis--Gundy (BDG) inequality (see~\cite{BDG-1972} or~\cite[Theorem~3.28]{KS1991}), for any~$p\ge1$, we have
	\begin{equation*}
		\E\,\Bigl(\,\sup_{0\le s\le t}|M_s|^{2p}\Bigr)
		\le c_1\,\E\biggl(\int_0^t\sum_{j=1}^\infty b_j^2u_j^2(s)\,\dd s\biggr)^p,
	\end{equation*}
	where $u_j=(u,e_j)$ is the $j^\text{\rm th}$ component of the solution~$u$, and~$c_i>0$ stand for numbers that may depend only on~$p$ and~$\nu$. Estimating~$b_j^2$ by~$B$ and using the H\"older inequality, we derive 
	\begin{equation}\label{BDG-M-t}
	\E\,\Bigl(\,\sup_{0\le s\le t}|M_s|^{2p}\Bigr)
	\le c_2\,t^{p-1}\int_0^t\E\,\|u(s)\|^{2p}\,\dd s.
	\end{equation}
	Now let $\varkappa>0$ be the number from Proposition~2.4.9 in~\cite{KS-book}, so that 
	\begin{equation}\label{expo-estimate}
		\E\,e^{\varkappa\nu\|u(s)\|^2}\le e^{-\varkappa\nu^2 s}\,\E\,e^{\varkappa\nu\|u_0\|^2}+c_3, \quad s\ge0. 
	\end{equation}
	Using the inequality $y^p\le c_4e^{\varkappa\nu y}$ with $y=\|u(s)\|^2$ and combining inequalities~\eqref{BDG-M-t} and~\eqref{expo-estimate}, we arrive at~\eqref{Mt-moments}. 
	
	To prove~\eqref{Mt-Ms}, we use again the BDG inequality. Arguing as in the case of~\eqref{Mt-moments}, we derive
	$$
	\E\,\Bigl(\,\sup_{t\le s\le t+1}|M_s-M_t|^{2p}\Bigr)
	\le c_5\int_t^{t+1}\E\,\|u(s)\|^{2p}\dd s
	\le c_6\int_t^{t+1}\E\,e^{\varkappa\nu\|u(s)\|^2}\dd s. 
	$$
	Using~\eqref{expo-estimate} to estimate the integrand in the right-most term of this inequality, we obtain~\eqref{Mt-Ms}. 
\end{proof}

\subsection{Proof of Theorem~\ref{thm: strong consistency}}
Let us recall that the Navier--Stokes system~\eqref{NS-reduced} generates a Markov process~$(u_t,\IP_{u_0})$ and denote by~$\{\FF_t\}_{t\ge0}$ the associated filtration. The proof is divided into three steps. 

\smallskip
{\it Step~1: Reduction to~$\xi_t$\/}. Suppose we have proved that there is a number $\varkappa>0$ such that, for any $\e\in(0,\frac12)$ and $\nu\in(0,1]$, one can find a random time $T_0\ge1$ for which 
\begin{align}
	\bigl|\xi_t-(2\nu)^{-1}B\bigr|
	&\le t^{-\frac12+\e}\quad\mbox{for $t\ge T_0$},\label{xi-t-nuB}\\
	\E_\lambda T_0^m&\le C_m\int_He^{\varkappa\nu\|u\|^2}\lambda(\dd u),\label{El-T0}
\end{align}
where $\lambda\in\PP(H)$ is an arbitrary initial measure, $m\ge1$ is any integer, and $C_m>0$ is a number depending on~$\e$ and~$\nu$. In this case, taking $\e=\frac14$ in~\eqref{xi-t-nuB}, we see that $|\xi_t|\ge (4\nu)^{-1}B$ for $t\ge T_0':=T_0\vee (4B^{-1})^4$. Combining this with relation~\eqref{nut-nu} and inequality~\eqref{xi-t-nuB}, in which~$\e$ is replaced with~$\e/2$, we obtain
$$
|\hat\nu_t-\nu|\le 4B^{-1}t^{-\frac12+\frac\e2}\quad\mbox{for $t\ge T_0'$}. 
$$
The right-hand side of this inequality does not exceed $t^{-\frac12+\e}$, provided that $t\ge (4B^{-1})^{2/\e}$. We thus obtain inequality~\eqref{consistency hatnu}, in which $T=T_0'\vee(4B^{-1})^{2/\e}$. The validity of~\eqref{moments-T} follows from a similar inequality for~$T_0$. 

To prove~\eqref{xi-t-nuB} and~\eqref{El-T0}, it suffices to construct random times~$T_1, T_2, T_3\ge1$ such that~\eqref{El-T0} holds for each of them, and
\begin{align}
	\qquad (2\nu t)^{-1}\|u_0\|^2&\le \tfrac13\,t^{-1/2}&\quad&\mbox{for $t\ge T_1$},\qquad\label{u0t}\\
	\qquad(2\nu t)^{-1}\|u(t)\|^2&\le \tfrac13\,t^{-1/2}&\quad&\mbox{for $t\ge T_2$},\qquad\label{utt}\\
	\qquad(\nu t)^{-1}|M_t|&\le \tfrac13\,t^{-1/2+\e}&\quad&\mbox{for $t\ge T_3$}.\qquad\label{Mtt}
\end{align}
This will be done in the next two steps. 

\smallskip
{\it Step~2: Estimates for~$\|u_0\|^2$ and~$\|u(t)\|^2$}. Inequality~\eqref{u0t} is satisfied with $T_1=\bigl(\frac{3}{2\nu}\bigr)^2\|u_0\|^4$. The validity of~\eqref{El-T0} for~$T_1$ is obvious. To prove~\eqref{utt}, we introduce the random variables 
$$
U_k:=\sup_{k\le t\le k+1}\|u(t)\|^2, \quad k\ge0,
$$
and define the events $A_k=\{U_k>\frac{2\nu}{3}k^{1/2}\}$; cf.~\eqref{utt}. Suppose we found positive numbers $C$, $c$, and~$\gamma$ such that, for any initial measure $\lambda\in\PP(H)$,
\begin{equation}\label{P-Ak}
	\IP_\lambda(A_k)\le C\,e^{-ck^{1/2}}\langle w_\gamma,\lambda\rangle,
	\quad k\ge1.
\end{equation}
In this case, by the Borel--Cantelli lemma, the random time
$$
T_2:=\min\bigl\{N\ge1:U_k\le \tfrac{2\nu}{3}\,k^{1/2}\mbox{ for }k\ge N\bigr\}
$$
is $\IP_\lambda$-almost surely finite. For $k\ge T_2$ and $t\in[k,k+1]$, we derive 
$$
\|u(t)\|^2\le U_k\le \frac{2\nu}{3}\,k^{1/2}\le \frac{2\nu}{3}t^{1/2}.
$$
We thus obtain the validity of~\eqref{utt}. Moreover, for any integer $m\ge1$, we can write 
$$
\E_\lambda T_2^m=\sum_{k=1}^\infty k^m\,\IP_\lambda\{T_2=k\}
\le 1+\sum_{k=2}^\infty k^m\,\IP_\lambda(A_{k-1}). 
$$
Combining this with~\eqref{P-Ak}, we see that~\eqref{El-T0} holds for~$T_2$ with $\varkappa=\gamma$. Thus, it remains to prove~\eqref{P-Ak}.

To this end, we use inequality~\eqref{supermartingale} and the Markov property. Namely, applying~\eqref{supermartingale} to a deterministic initial condition~$v\in H$, we see that 
$$
\IP_v\{U_0>r\}\le \exp\bigl\{-\gamma\nu(r-B-\|v\|^2)\bigr\}\quad\mbox{for any $r>0$}.
$$
Taking $r=\frac{2\nu}{3}k^{1/2}$ and assuming without loss of generality that $\gamma\le\varkappa$, where $\varkappa>0$ is the number in~\eqref{expo-estimate}, we derive
\begin{align*}
	\IP_\lambda(A_k)
	&=\E_\lambda\bigl(\IP_\lambda\{A_k\,|\,\FF_k\}\bigr)=\E_\lambda\IP_{u_k}\bigl\{U_0>\tfrac{2\nu}{3}\,k^{1/2}\bigr\}
	\le C\,e^{-ck^{1/2}}\,\E_\lambda e^{\gamma\nu\|u_k\|^2}.
\end{align*}
Combining this with~\eqref{expo-estimate}, at arrive at~\eqref{P-Ak}.  

\smallskip
{\it Step~3: Estimate for~$M_t$}. Let us set 
$$
V_k:=\sup_{k\le t\le k+1}|M_t-M_k|, \quad k\ge0.
$$
To prove~\eqref{Mtt}, it suffices to construct a random integer $T_3\ge1$ satisfying~\eqref{El-T0} such that 
\begin{equation}\label{Mk-Vk-bound}
	|M_k|\le\frac{\nu}{10}\,k^{\frac12+\e}, \quad 
	V_k\le\frac{\nu}{10}\,k^{\frac12+\e}\quad\mbox{for $k\ge T_3$}. 
\end{equation}
Indeed, if these inequalities are established, then taking any $t\ge T_3$ and denoting by~$k\le t$ the integer part of~$t$, we can write
$$
t^{-1}|M_t|\le t^{-1}|M_t-M_k|+t^{-1}|M_k|
\le k^{-1}(V_k+|M_k|)\le \frac{\nu}{5}\,k^{-\frac12+\e}\le \frac{\nu}{3}\,t^{-\frac12+\e}. 
$$
This coincides with~\eqref{Mtt}. The proofs of inequalities~\eqref{Mk-Vk-bound} are based on~\eqref{Mt-moments} and~\eqref{Mt-Ms}, and the argument is similar to that used in Step~2. Therefore, we confine ourselves to the proof of the estimate for~$M_k$. 

Let us fix any $\e\in(0,\frac12)$ and define the random time 
$$
T_3=\min\{N\ge1: |M_k|\le \tfrac{\nu}{10}\,k^{\frac12+\e}\}.
$$
Using~\eqref{Mt-moments} and the Chebyshev inequality, for any $p\ge1$ we derive
\begin{equation}\label{PMk-bound}
\IP_\lambda\bigl\{|M_k|>\tfrac{\nu}{10}\,k^{\frac12+\e}\bigr\}\le \bigl(\tfrac{\nu}{10}\,k^{\frac12+\e}\bigr)^{-2p}\,\E_\lambda|M_k|^{2p}\le C_p'k^{-2p\e}\langle w_\varkappa,\lambda\rangle,	
\end{equation}
where the number~$C_p'>0$ does not depend on~$\lambda$. Thus, taking $p>(2\e)^{-1}$ and using the Borel-Cantelli lemma, we conclude that~$T_3$ is $\IP_\lambda$-almost surely finite. Moreover, in view of~\eqref{PMk-bound} and the definition of~$T_3$, for any $m\ge1$, we have
\begin{align*}
\E_\lambda T_3^m&=\sum_{k=1}^\infty k^m\,\IP_\lambda\{T_3=k\}
\le 1+\sum_{k=2}^\infty k^m\,\IP_\lambda\bigl\{|M_k|>\tfrac{\nu}{10}\,k^{\frac12+\e}\bigr\}\\
&\le 1+C_p'\langle w_\varkappa,\lambda\rangle\sum_{k=2}^\infty k^{m-2p\e}.
\end{align*}
Taking $p>\frac{m+1}{\e}$, we see that the right-most term in this inequality can be estimated from above by the right-hand side of~\eqref{El-T0}. This completes the proof of Theorem~\ref{thm: strong consistency}.\qed

\begin{remark}\label{r-growth-delta}
	Exactly the same argument as in Step~2 in the above proof enables one to derive the following sharper version of~\eqref{u0t} and~\eqref{utt}: for any $\delta>0$ there is a random time $T_0\ge1$ satisfying~\eqref{El-T0} such that 
	\begin{equation}\label{growth-delta}
	\|u(t)\|^2+\|u_0\|^2\le t^{\delta}\quad\mbox{for $t\ge T_0$}. 
	\end{equation}
	This observation will be important in the next section when proving the asymptotic normality of~$\hat\nu_t$. 
\end{remark}

\section{Asymptotic normality}
\label{s-clt}

\subsection{Slutsky theorem with rate of convergence}
\label{s-slutsky}

As in the case of the consistency, the asymptotic normality is established with the help of a reduction to the study of similar property for the random process~$\xi_t$. To this end, we shall need the following version of the $\delta$-method which is a consequence of Slutsky theorem; cf.~\cite[Chapter~18]{JP2003}.

\begin{proposition}\label{p-slutsky}
	Let $\{\eta_t\}_{t\ge0}$ be a real-valued random process such that
\begin{equation}\label{slutsky-estimate}
	\sup_{s\in\R}\,\bigl|\IP\{\sqrt{t}\,(\eta_t-a)\le s\}-\varPhi_\sigma(s)\bigr|\le C\,t^{-b}\quad \mbox{for $t\ge1$},
\end{equation}
where $a\in\R\setminus\{0\}$, $b\in(0,\frac14]$, $C>0$, and~$\sigma>0$ are some numbers. Then, for any $c>0$, we have
\begin{equation}\label{slutsky-conclusion}
	\sup_{s\in\R}\,\bigl|\IP\{c\sqrt{t}\,(\eta_t^{-1}-a^{-1})\le s\}-\varPhi_{\hat\sigma}(s)\bigr|\le L\,t^{-b}\quad \mbox{for $t\ge1$},
\end{equation}
where $\hat\sigma=c\sigma/a^2$, and $L>0$ is a number depending only on~$a$, $\sigma$,  and~$c$. 
\end{proposition}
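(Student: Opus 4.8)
The plan is to reduce the statement about $\eta_t^{-1}$ to the hypothesis about $\eta_t$ via the mean value theorem, treating separately the "good" event on which $\eta_t$ stays near $a$ and the "bad" event on which it does not. Write $g(x)=c\,x^{-1}$, so that $c\sqrt t\,(\eta_t^{-1}-a^{-1})=\sqrt t\,(g(\eta_t)-g(a))$ and $g'(a)=-c/a^2$, $|g'(a)|=\hat\sigma/\sigma$. The target random variable is $\sqrt t\,(g(\eta_t)-g(a))$, and we want to compare its distribution function with that of $\NN_{\hat\sigma}$, which is $\varPhi_{\hat\sigma}$.

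First I would fix a small threshold $\delta\in(0,|a|/2)$ and introduce the event $G_t=\{|\eta_t-a|\le\delta\}$. On $G_t$, since $g$ is $C^\infty$ away from $0$ and $|\eta_t|\ge|a|-\delta\ge|a|/2$, Taylor's formula gives $g(\eta_t)-g(a)=g'(a)(\eta_t-a)+r_t$ with $|r_t|\le K_\delta(\eta_t-a)^2$, where $K_\delta=\tfrac12\sup_{|x-a|\le\delta}|g''(x)|$ depends only on $a$, $c$, $\delta$. Hence on $G_t$,
\begin{equation*}
	\bigl|\sqrt t\,(g(\eta_t)-g(a))-g'(a)\sqrt t\,(\eta_t-a)\bigr|\le K_\delta\sqrt t\,(\eta_t-a)^2.
\end{equation*}
The second ingredient is a tail bound: from \eqref{slutsky-estimate} applied at $s=\pm\sqrt t\,\delta$ we get $\IP(G_t^c)=\IP\{|\sqrt t(\eta_t-a)|>\sqrt t\,\delta\}\le 2(1-\varPhi_\sigma(\sqrt t\,\delta))+2C t^{-b}$, and since $1-\varPhi_\sigma(\sqrt t\,\delta)$ decays exponentially in $t$, we obtain $\IP(G_t^c)\le C_1 t^{-b}$ for $t\ge1$ with $C_1$ depending on $a,\sigma,c,\delta,C$.

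Next I would control the remainder term on $G_t$: I need $\IP(\{K_\delta\sqrt t(\eta_t-a)^2>t^{-b}\}\cap G_t)$ to be $O(t^{-b})$. On $G_t$ this event forces $|\sqrt t(\eta_t-a)|>(K_\delta^{-1}t^{1/2-b})^{1/2}$, i.e. $|\sqrt t(\eta_t-a)|>c_2 t^{1/4-b/2}$; since $b\le\tfrac14$ the exponent $1/4-b/2\ge0$, so this probability is again bounded, via \eqref{slutsky-estimate}, by $2(1-\varPhi_\sigma(c_2 t^{1/4-b/2}))+2Ct^{-b}\le C_3 t^{-b}$. Setting $W_t=g'(a)\sqrt t\,(\eta_t-a)$, which by \eqref{slutsky-estimate} and the scaling identity $\DD(g'(a)X)=\NN_{\hat\sigma}$ when $\DD(X)=\NN_\sigma$ satisfies
\begin{equation*}
	\sup_{s\in\R}\bigl|\IP\{W_t\le s\}-\varPhi_{\hat\sigma}(s)\bigr|\le C t^{-b}\quad\text{for }t\ge1,
\end{equation*}
I then combine a standard Lévy-type smoothing estimate with the uniform continuity of $\varPhi_{\hat\sigma}$ (whose density is bounded): on the complement of the two bad events of probability $O(t^{-b})$, the target variable differs from $W_t$ by at most $t^{-b}$, hence for every $s$,
\begin{equation*}
	\IP\{\sqrt t(g(\eta_t)-g(a))\le s\}\le \IP\{W_t\le s+t^{-b}\}+O(t^{-b})\le \varPhi_{\hat\sigma}(s+t^{-b})+O(t^{-b})\le \varPhi_{\hat\sigma}(s)+L t^{-b},
\end{equation*}
using $\|\varPhi_{\hat\sigma}'\|_\infty<\infty$ for the last step; the matching lower bound is symmetric. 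This yields \eqref{slutsky-conclusion} with $L$ depending only on $a,\sigma,c$ (absorbing the fixed $\delta$, e.g. $\delta=|a|/4$).

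The main obstacle I anticipate is the bookkeeping of constants and the verification that every error term genuinely decays like $t^{-b}$ rather than some worse power — in particular making sure the quadratic remainder contributes $O(t^{-b})$ and not $O(t^{-b/2})$ or similar, which is exactly where the restriction $b\le\tfrac14$ is used (it guarantees $t^{1/4-b/2}\to\infty$, so the Gaussian tail for the remainder beats $t^{-b}$). Everything else is routine: Taylor expansion, the Gaussian tail bound, and the one-dimensional Berry–Esseen-style comparison that turns a small additive perturbation plus a uniform CDF bound into a uniform CDF bound for the perturbed variable.
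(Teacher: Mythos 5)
Your proposal is correct, but its mechanics differ from the paper's argument, so a comparison is worth recording. The paper does not Taylor-expand: after reducing to $c=1$, $a>0$, it localises at the shrinking scale $|\eta_t-a|\le\frac12 a\,t^{-1/4}$ (an event of probability $1-O(t^{-b})$ by the Gaussian tail, exactly as in your bound for $G_t^c$) and then transfers the event $\{\sqrt t\,(\eta_t^{-1}-a^{-1})\le s\}$ \emph{exactly}, via the identity $\eta_t^{-1}-a^{-1}=-(\eta_t-a)/(a\eta_t)$, into $\sqrt t\,(\eta_t-a)\ge -a^2s\bigl(1+\frac12(\sgn s)t^{-1/4}\bigr)$; the resulting distortion is an interval $J_{s,t}$ of length $\frac12 a^2|s|\,t^{-1/4}$ near $-a^2s$, whose $\NN_\sigma$-measure is uniformly $O(t^{-1/4})\le O(t^{-b})$ because $|s|$ times the Gaussian density at $-a^2 s$ is bounded -- this is where $b\le\frac14$ enters there. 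You instead linearise $g(x)=c/x$ at $a$, work on a fixed-size neighbourhood $G_t$, convert the quadratic remainder into an additive perturbation of size $t^{-b}$ outside a second small event (controlled again by \eqref{slutsky-estimate} at level $t^{1/4-b/2}$, which is where you use $b\le\frac14$; note you need this exponent strictly positive, which indeed holds since it is $\ge\frac18$), and finish with the standard comparison inequality $\sup_z|\Delta_{\hat\sigma}(X,z)|\le\sup_z|\Delta_{\hat\sigma}(Y,z)|+\IP\{|X-Y|>\e\}+c_{\hat\sigma}\e$ with $\e=t^{-b}$ -- the same Lemma~2.9-type tool the paper invokes elsewhere. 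Your route is more modular and would apply verbatim to any $C^2$ map $g$ with $g'(a)\ne0$, at the cost of a second localisation scale and the smoothing step; the paper's route is more hands-on, keeps a single scale $t^{-1/4}$, and handles the $s$-dependence of the error through the decay of the Gaussian density rather than through an $s$-uniform additive perturbation. Two cosmetic points: your final constant $L$ also depends on $C$ (the paper's constants $C_i$ do too, so this matches the statement's slight looseness), and the claim about $\DD(g'(a)X)=\NN_{\hat\sigma}$ silently uses the symmetry of the centred Gaussian since $g'(a)<0$, which is fine but worth stating.
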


\begin{proof}
	There is no loss of generality in assuming that $c=1$ and $a>0$. To prove~\eqref{slutsky-conclusion} with $c=1$, it suffices to establish the following upper and lower bounds for any $s\in\R$ and $t\ge1$:
	\begin{align}
		\IP\bigl\{\sqrt{t}\,(\eta_t^{-1}-a^{-1})\le s\bigr\}&\le \varPhi_{\hat\sigma}(s)+C_1t^{-b},\label{sl-upper}\\
		\IP\bigl\{\sqrt{t}\,(\eta_t^{-1}-a^{-1})\le s\bigr\}&\ge \varPhi_{\hat\sigma}(s)-C_1t^{-b}.\label{sl-lower}
	\end{align}
	We confine ourselves to the proof of~\eqref{sl-upper}, since~\eqref{sl-lower} can be established by a similar argument. 
	
In what follows, we always assume that $t\ge1$. Inequality~\eqref{slutsky-estimate} implies that, for any interval\footnote{The interval~$I$ may be bounded or unbounded and may contain or not its endpoints.} $I\subset\R$, we have
\begin{equation}\label{slutsky-interval}
	\bigl|\IP\{\sqrt{t}\,(\eta_t-a)\in I\}-\varPhi_\sigma(I)\bigr|\le 2C\,t^{-b},
\end{equation}
where $\varPhi_\sigma(I)$ is the measure of~$I$ with respect to the centred normal law~$\NN_\sigma$. Let us set $\gamma_t=\frac12at^{1/4}$ and $I_t=[-\gamma_t,\gamma_t]$. In view of~\eqref{slutsky-interval}, we have
\begin{equation}\label{xi-a}
	\bigl|\IP\bigl\{|\eta_t-a|\le \tfrac{1}{2}\,at^{-1/4}\bigr\}-\varPhi_\sigma(I_t)\bigr|
	\le 2C\,t^{-b}. 
\end{equation}
	It is straightforward to check that 
	$$
	\varPhi_\sigma(I_t)\ge 1-2\bigl(\sqrt{2\pi}\,\gamma_t\bigr)^{-1}\sigma e^{-\gamma_t^2/2\sigma^2}\ge 1-C_1t^{-b},
	$$
	where we denote by~$C_i$ some numbers depending only on~$a$, $\sigma$, and~$C$. Combining this with~\eqref{xi-a}, we see that 
\begin{equation}\label{xi-a-lb}
	\IP\bigl\{|\eta_t-a|\le \tfrac{1}{2}\,at^{-1/4}\bigr\}
	\ge 1-C_2t^{-b}. 
\end{equation}
Let us denote by~$A_t(s)$ and~$B_t$ the events under the probability signs in~\eqref{sl-upper} and~\eqref{xi-a-lb}, respectively. It follows from~\eqref{xi-a-lb} that
\begin{equation}\label{PAts}
	\IP\bigl(A_t(s)\bigr)\le \IP\bigl(A_t(s)\cap B_t\bigr)+\IP(B_t^c)
	\le \IP\bigl(A_t(s)\cap B_t\bigr)+C_2t^{-b}. 
\end{equation}
Now note that if $\omega\in A_t(s)\cap B_t$ for some $t\ge1$, then 
$$
\sqrt{t}\,(\eta_t-a)\ge -a^2s\bigl(1+\tfrac12(\sgn s)\,t^{-1/4}\bigr),
$$
where $\sgn s$ stands for the sign of~$s$. Therefore, denoting by $J_{s,t}$ the interval with the endpoints $-a^2s$ and $-a^2s-\frac12a^2|s|\,t^{-1/4}$, we can write
$$
\IP\bigl(A_t(s)\cap B_t\bigr)
\le \IP\{\sqrt{t}\,(\eta_t-a)\ge -a^2s\}
+\IP\{\sqrt{t}\,(\eta_t-a)\in J_{s,t}\}.
$$
The first term on the right-hand side can be estimated by 
$$
\varPhi_\sigma\bigl([-a^2s,+\infty)\bigr)+2C\,t^{-b}=\varPhi_{\hat\sigma}(s)+2C\,t^{-b},
$$
while the second does not exceed $C_3\,t^{-b}$. Substituting these estimates into~\eqref{PAts}, we arrive at~\eqref{sl-upper}.
\end{proof}

\subsection{Proof of Theorem~\ref{thm:asympto}}
We begin with a number of reductions. In view of Proposition~\ref{p-slutsky}, it suffices to prove inequality~\eqref{AN-reduced} with some number~$\sigma_\nu>0$. Given a random variable~$X$, let us denote $\Delta_\sigma(X,z)=F_X(z)-\varPhi_\sigma(z)$, where $F_X(z)=\IP\{X\le z\}$ is the distribution function of~$X$. Then, for any $\e>0$ and any random variables~$X$ and~$Y$, we have (e.g., see Lemma~2.9 in~\cite{shirikyan-ptrf2006}) 
	\begin{equation*}
		\sup_{z\in\R} \,|\Delta_\sigma(X,z)|\le 
		\sup_{z\in\R} \,|\Delta_\sigma(Y,z)|+ \IP \{ |X-Y|>\e\} + c_\sigma\e,
	\end{equation*}
	where $c_\sigma = (\sigma\sqrt{2\pi})^{-1}$. Recalling representation~\eqref{xi-t}, using inequality~\eqref{growth-delta}, and applying the argument in Step~1 the proof Theorem~2.8 in~\cite{shirikyan-ptrf2006}, we see that it suffices to establish the inequality 
	\begin{equation}\label{clt-for-Mk}
		\sup_{z\in\R}\,\bigl|\IP_\lambda\{k^{-1/2}M_k\le z\}-\varPhi_\sigma(z)\bigr|
		\le C_\e\bigl(\langle w_\varkappa,\lambda\rangle\bigr)\,k^{-\frac14+\e}, \quad k\ge1,
	\end{equation}
where $C_\e:\R_+\to\R_+$ is an increasing function. The proof of this inequality is based on the study of the conditional variance. 

Namely, following~\cite[Section~2.7]{HH1980}, we define the {\it conditional variance\/} for the zero-mean martingale~$M_k$ by the formula
$$
V_n^2 = \sum_{k=1}^{n} \E_\lambda \bigl\{(M_k-M_{k-1})^2\,\big|\,\mathcal{F}_{k-1}\bigr\},
$$
where $\{\FF_t\}$ is the filtration associated with the Markov process~$(u_t,\IP_{u_0})$. The following result is a modified version of Theorem~3.7 in~\cite{HH1980} and reduces the proof of~\eqref{clt-for-Mk} to the law of large numbers for~$V_n$. Its proof can be found in~\cite[Section~4.2]{shirikyan-ptrf2006}.

	\begin{proposition}\label{p-clt-lln}
	Suppose there are positive numbers~$\theta$ and~$\Theta$ such that
		\begin{equation}\label{ineq:expX_k}
		\E_\lambda \exp (\theta\,|M_k-M_{k-1}|) \le \Theta\quad\mbox{for $k\ge1$}.
		\end{equation}
		Then, for any $\bar\sigma>0$ and $\e \in (0, \frac{1}{4})$, there is a constant $A_\e(\bar\sigma)>0$ depending on~$\theta$ and~$\Theta$ such that, for any $q>0$,  $\sigma\geq\bar\sigma$, and $n\ge1$, we have
		\begin{equation}\label{clt-lln}
		\sup_{z\in\R} \bigl|\Delta_\sigma(n^{-\frac{1}{2}}M_n,z)\bigr| \leq A_\e(\bar{\sigma}) n^{-\frac{1}{4}+\e} + \sigma^{-4q} n^{q(1-4\e)} \,\E_\lambda\bigl|n^{-1}V_n^2 - \sigma^2\bigr|^{2q}.
		\end{equation}
	\end{proposition}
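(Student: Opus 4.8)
The plan is to deduce this from the martingale CLT with rate of convergence of Heyde--Brown type (Theorem~3.7 in~\cite{HH1980}), combined with a truncation argument to pass from the polynomial moment bounds there to the exponential bound~\eqref{ineq:expX_k}, and to track how the constants depend on~$\sigma$ so that the right-hand side of~\eqref{clt-lln} has the stated form. Write $X_k=M_k-M_{k-1}$ for the martingale differences, so that $M_n=\sum_{k=1}^n X_k$ and $V_n^2=\sum_{k=1}^n\E_\lambda\{X_k^2\mid\FF_{k-1}\}$. First I would recall the classical estimate: there is an absolute constant~$c$ such that, for a zero-mean square-integrable martingale,
\begin{equation*}
	\sup_{z\in\R}\bigl|\Delta_\sigma(n^{-1/2}M_n,z)\bigr|
	\le c\,\Bigl(\sigma^{-2}n^{-1}\textstyle\sum_{k=1}^n\E_\lambda|X_k|^{2+2\delta}
	+\sigma^{-2}\,\E_\lambda\bigl|n^{-1}V_n^2-\sigma^2\bigr|\Bigr)^{\!\rho}
\end{equation*}
for suitable exponents $\delta\in(0,1]$ and $\rho\in(0,1)$ coming from~\cite[Ch.~3]{HH1980}; more precisely, taking $\delta=1$ one obtains a bound with rate controlled by the fourth-moment sum and the $L^1$-deviation of the conditional variance. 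The hypothesis~\eqref{ineq:expX_k} immediately yields $\E_\lambda|X_k|^{r}\le C(r,\theta,\Theta)$ for every $r\ge1$, so the moment sum $n^{-1}\sum_{k=1}^n\E_\lambda|X_k|^{2+2\delta}$ is bounded uniformly in~$n$, and this term contributes the $A_\e(\bar\sigma)n^{-1/4+\e}$ piece after choosing $\delta$ and $\rho$ appropriately and using $\sigma\ge\bar\sigma$ to absorb the $\sigma^{-2}$ factor into $A_\e(\bar\sigma)$.

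The second step is to handle the conditional-variance term. Rather than using the crude $L^1$ bound, I would apply H\"older's inequality to replace $\E_\lambda|n^{-1}V_n^2-\sigma^2|$ by a higher moment: for any $q>0$,
\begin{equation*}
	\E_\lambda\bigl|n^{-1}V_n^2-\sigma^2\bigr|
	\le \bigl(\E_\lambda\bigl|n^{-1}V_n^2-\sigma^2\bigr|^{2q}\bigr)^{1/(2q)}.
\end{equation*}
Plugging this into the CLT estimate and optimising — or rather, just bounding the resulting $\rho/(2q)$-power crudely by splitting the outer power and using the elementary inequality $(x+y)^\rho\le x^\rho+y^\rho$ for $\rho\le1$ — produces a term of the shape $\sigma^{-2\rho/(2q)}\bigl(\E_\lambda|n^{-1}V_n^2-\sigma^2|^{2q}\bigr)^{\rho/(2q)}$ which I then need to massage into the exact form $\sigma^{-4q}n^{q(1-4\e)}\E_\lambda|n^{-1}V_n^2-\sigma^2|^{2q}$ appearing in~\eqref{clt-lln}. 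This is the bookkeeping heart of the argument: one chooses the free exponent in the Heyde--Brown bound so that the exponent of the conditional-variance moment comes out to be exactly~$2q$, and one checks that the accompanying powers of~$n$ and~$\sigma$ are dominated by (or can be taken equal to) $n^{q(1-4\e)}$ and $\sigma^{-4q}$ respectively; since the statement only claims an inequality, any slack can be absorbed. The restriction $\e\in(0,\frac14)$ ensures the exponent $q(1-4\e)$ is positive and the first term $n^{-1/4+\e}$ genuinely decays.

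The main obstacle I anticipate is not the probabilistic content — the martingale CLT with a polynomial rate is standard — but rather making the dependence of the constant on~$\sigma$ uniform over $\sigma\ge\bar\sigma$ while simultaneously allowing $\sigma$ to be large. The Heyde--Brown bounds typically carry factors of $\sigma^{-2}$ or $\sigma^{-2/3}$; these are harmless for a lower bound on~$\sigma$ but must be tracked carefully so that the final constant $A_\e(\bar\sigma)$ depends on~$\bar\sigma$ only through such negative powers, and so that the $\sigma^{-4q}$ in front of the conditional-variance moment is the correct power (in the application that follows, $V_n^2/n\to\sigma_\nu^2$ and $\sigma$ is taken to be a fixed multiple of $\sigma_\nu$, so only a lower bound matters). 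A secondary point requiring care is that~\cite{HH1980} states its results for triangular arrays or for a single martingale under a Lindeberg-type condition; I would cite the precise form from~\cite[Section~4.2]{shirikyan-ptrf2006}, where exactly this reduction — replacing the Lindeberg condition by an exponential-moment condition on the increments via truncation, and replacing the weak convergence $V_n^2/n\to\sigma^2$ by a quantitative $L^{2q}$ bound — has already been carried out, so that the present proposition is essentially a restatement with the constants made explicit. Hence the proof reduces to quoting that reference and verifying that~\eqref{ineq:expX_k} implies all its hypotheses.
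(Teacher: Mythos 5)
Your final reduction is exactly what the paper does: it offers no self-contained proof of Proposition~\ref{p-clt-lln}, describing it as a modified version of Theorem~3.7 in \cite{HH1980} and citing \cite[Section~4.2]{shirikyan-ptrf2006} for the proof, and your observation that hypothesis~\eqref{ineq:expX_k} is precisely what that reference requires is correct. Be aware, though, that your preliminary sketch would not stand on its own: with the restriction $\delta\le1$ in Theorem~3.7 the moment term only yields the rate $n^{-1/5}$, which is weaker than $n^{-1/4+\e}$ once $\e<\frac{1}{20}$, and the H\"older step needs $2q\ge 1+\delta$ (so ``any $q>0$'' is not covered), which is why the truncation exploiting the exponential bound~\eqref{ineq:expX_k} --- the ``modification'' carried out in the cited reference --- is essential rather than mere bookkeeping.
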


	Thus, to prove~\eqref{clt-for-Mk}, it suffices to establish~\eqref{ineq:expX_k} and to estimate the second term on the right-hand side of~\eqref{clt-lln}. This is done in the next two steps. 
	
	\smallskip
	{\it Step~1: Proof of~\eqref{ineq:expX_k}\/}. Let us define the martingales
	$$
	X_t(k)=M_t-M_{k-1}=\int_{k-1}^t\bigl(u(s),\dd\zeta(s)\bigr), \quad t\in[k-1,k], \quad k\ge1.
	$$
	In view of the inequality $e^{|y|}\le e^y +e^{-y}$, to prove~\eqref{ineq:expX_k}, it suffices to estimate the quantities $\E_\lambda e^{\theta X_k}$ for $|\theta|\ll1$, where $X_k=X_k(k)$. To this end, we first note that the quadratic variation of~$X_t(k)$ on the interval~$[k-1,k]$ can be written as 
	$$
	\langle X(k)\rangle
	=\sum_{j=1}^\infty\int_{k-1}^kb_j^2u_j^2(t)\,\dd t
	\le B\int_{k-1}^k\|u(t)\|^2\dd t,
	$$ 
	where $u_j(t)=(u(t),e_j)$. By the Cauchy--Schwarz inequality, for any $\theta\in\R$ we have 
\begin{align}
	\E_\lambda e^{\theta X_k}&=
	\E_\lambda e^{\theta X_k-\theta^2\langle X(k)\rangle+\theta^2\langle X(k)\rangle}\notag\\
	&\le \Bigl(\E_\lambda e^{2\theta X_k-2\theta^2\langle X(k)\rangle}\Bigr)^{1/2}
	\Bigl(\E_\lambda e^{2\theta^2\langle X(k)\rangle}\Bigr)^{1/2}.\label{expo-estimate-theta}
\end{align}
The second expectation in the right-most term of~\eqref{expo-estimate-theta} is bounded by a number not depending on~$k$, provided that $\theta^2$ is sufficiently small; see inequality~(3.6) in~\cite{shirikyan-ptrf2006}. In particular, the Novikov condition is satisfied for the martingale~$2\theta X_t(k)$, so that the first expectation in the right-most term of~\eqref{expo-estimate-theta} is equal to~$1$; see Proposition~5.12 in~\cite{KS1991}. We thus obtain the required bound~\eqref{ineq:expX_k} with $0<\theta\ll 1$ and a number~$\Theta$ not depending on~$k$.

	\smallskip
	{\it Step~2: Law of large numbers for~$V_n$\/}. Recalling that $X_k=M_k-M_{k-1}$, suppose we have established the relation 
	\begin{equation}\label{cond-expect}
		\E_\lambda\{X_k^2\,|\,\FF_{k-1}\}=g(u_{k-1}), \quad k\ge1,
	\end{equation}
	where the equality holds $\IP_\lambda$-almost everywhere, and $g:H\to\R$ is a continuous function satisfying the following condition for any $\delta>0$ and some number $\alpha\in(0,1)$ depending on~$\delta$:
	$$
\|g\|_{\alpha,\delta}:=\sup_{u\in H}\frac{|g(u)|}{w_\delta(u)}+\sup_{\substack{u,v\in H \\ u\ne v}}\frac{|g(u)-g(v)|}{\|u-v\|^\alpha\bigl(w_\delta(u)+w_\delta(v)\bigr)}<\infty.
	$$
	In this case, as is proved in Step~3 of~\cite[Section~3.2]{shirikyan-ptrf2006}, for any integer $q\ge1$ we have 
	\begin{equation}\label{LLN-rate}
		\E_\lambda\bigl|n^{-1}V_n^2 - \langle g,\mu_\nu\rangle\bigr|^{2q}
		\le C_{q,\nu}\langle w_\varkappa,\lambda\rangle n^{-q}, \quad n\ge1, 
	\end{equation}
	where $\mu_\nu\in\PP(H)$ is the unique stationary measure of~\eqref{NS}--\eqref{external-force}, and $C_{q,\nu}>0$ is a number not depending on~$n$ and~$\lambda$. Let us assume, in addition, that 
	\begin{equation}\label{sigma-nu}
		\sigma_\nu^2:=\langle g,\mu_\nu\rangle>0.
	\end{equation}
	Combining inequalities~\eqref{clt-lln} and~\eqref{LLN-rate}, in which  $\sigma=\sigma_\nu$, $\e>0$ is arbitrary, and $q=(16\e)^{-1}$, we derive~\eqref{clt-for-Mk}. Thus, it remains to establish~\eqref{cond-expect} and~\eqref{sigma-nu}.
	
	In view of the Markov property, relation~\eqref{cond-expect} holds with 
	\begin{equation*}
		g(v)=\E_vX_1^2=\E_v\biggl(\int_0^1\bigl(u(t),\dd\zeta(t)\bigr)\biggr)^2
		=\sum_{j=1}^\infty b_j^2\int_0^1\E_v u_j(t)^2\dd t,
	\end{equation*}
	where $v\in H$.	Denoting $f_j(v)=(v,e_j)^2$ and writing $\{\PPPP_t\}_{t\ge0}$ for the Markov semigroup associated with~\eqref{NS-reduced}, we see that 
	\begin{equation}\label{g-v}
		g(v)=\sum_{j=1}^\infty b_j^2\int_0^1(\PPPP_tf_j)(v)\,\dd t.
	\end{equation}
	Applying Lemma~3.2 in~\cite{shirikyan-ptrf2006}, we see that $\|\PPPP_tf_j\|_{\alpha,\delta}\le C$ for any $t\in[0,1]$ and $j\ge1$. Taking the norm in~\eqref{g-v}, we see that $\|g\|_{\alpha,\delta}\le BC$. Finally, integrating~\eqref{g-v} against $\mu_\nu$ and using the stationarity of~$\mu_\nu$, we derive
	$$
	\langle g,\mu_\nu\rangle=\sum_{j=1}^\infty b_j^2\int_H v_j^2\mu_\nu(\dd v).
	$$
	Since all the numbers~$b_j$ are positive and~$\mu_\nu$ cannot be concentrated at zero, we conclude that~\eqref{sigma-nu} holds. This completes the proof of Theorem~\ref{thm:asympto}.\qed
	
\addcontentsline{toc}{section}{Bibliography}
\def\cprime{$'$} \def\cprime{$'$}
  \def\polhk#1{\setbox0=\hbox{#1}{\ooalign{\hidewidth
  \lower1.5ex\hbox{`}\hidewidth\crcr\unhbox0}}}
  \def\polhk#1{\setbox0=\hbox{#1}{\ooalign{\hidewidth
  \lower1.5ex\hbox{`}\hidewidth\crcr\unhbox0}}}
  \def\polhk#1{\setbox0=\hbox{#1}{\ooalign{\hidewidth
  \lower1.5ex\hbox{`}\hidewidth\crcr\unhbox0}}} \def\cprime{$'$}
  \def\polhk#1{\setbox0=\hbox{#1}{\ooalign{\hidewidth
  \lower1.5ex\hbox{`}\hidewidth\crcr\unhbox0}}} \def\cprime{$'$}
  \def\cprime{$'$} \def\cprime{$'$} \def\cprime{$'$}
\providecommand{\bysame}{\leavevmode\hbox to3em{\hrulefill}\thinspace}
\providecommand{\MR}{\relax\ifhmode\unskip\space\fi MR }
\providecommand{\MRhref}[2]{%
  \href{http://www.ams.org/mathscinet-getitem?mr=#1}{#2}
}
\providecommand{\href}[2]{#2}

\end{document}